\newcommand{\mE}{\mathcal{E}}
\newcommand{\F}{\mathcal{F}}
\newcommand{\mC}{\mathcal{C}}
\newcommand{\mF}{\mathcal{F}}
\newcommand{\mD}{\mathcal{D}}
\newcommand{\R}{\mathbb{R}}
\newcommand{\LX}{{L^2(X,\,\mu)}}
\newcommand{\abs}[1]{\left|#1\right|}       
\newcommand{\norm}[1]{||#1||}                  
\newcommand{\ang}[1]{\left<#1\right>}       
\newcommand{\brak}[1]{\left(#1\right)}      
\newcommand{\crl}[1]{\left\{#1\right\}}     
\DeclareMathOperator*{\esssup}{ess\,sup}
\DeclareMathOperator*{\osc}{\text{osc}}
\DeclareMathOperator*{\essinf}{ess\,inf}
\newtheorem{theorem}{Theorem}[section]
\newtheorem{definition}[theorem]{Definition}
\newtheorem{proposition}[theorem]{Proposition}
\newtheorem{corollary}[theorem]{Corollary}
\newtheorem{lemma}[theorem]{Lemma}
\newtheorem{remark}[theorem]{Remark}
\numberwithin{equation}{section}
\title{Extension method in Dirichlet spaces with sub-Gaussian estimates and applications to regularity of jump processes on fractals} 
\author{Fabrice Baudoin,Quanjun Lang,Yannick Sire}
\date{}
\begin{document}
\maketitle

\begin{abstract}
We investigate regularity properties of some non-local equations defined on Dirichlet spaces equipped with sub-Gaussian estimates for the heat kernel associated to the generator. We prove that weak solutions for homogeneous equations involving pure powers of the generator are actually H\"older continuous and satisfy an Harnack inequality. Our methods are based on a version of the Caffarelli-Silvestre extension method which is valid in any Dirichlet space and our results complement the existing literature on  solutions of PDEs on classes of Dirichlet spaces such as fractals.  
\end{abstract}

\tableofcontents

\section{Introduction}

The last two decades have seen an important amount of work whose aim is to realize the powers of some operators $L$ in terms of a suitable extension, in order to deduce properties of solutions as a by-product of solutions in the extension. This method is very powerful to deal with equations involving powers of $L$ but also, in the other direction, to understand properties of equations in a "lifted" space of the type $X \times \R^+$ using tools of purely non-local nature on the space $X$. This approach has been undertaken in a PDE framework in the original seminal paper of Caffarelli and Silvestre \cite{Caffarelli_2007}. Since then, a very substantial amount of literature has been devoted to this type of "dictionary" between equations satisfied in $X\times \R^+$ and the related equation on $X$.

When the operator $L$ is second-order in divergence-form, for instance, the extension appears to be a differential operator, and classical tools from PDEs allow to get (or recover) several results on the operator $(-L)^s$ (for $0<s<1$) such as regularity estimates and fine properties of solutions of an associated PDE. Functions of $L$ are, of course, multipliers in the sense of harmonic analysis, and one can then get new proofs of some classical results (see, e.g.  \cite{LenzSchi,brazke,BBCS} and references therein). 

Powers of some suitable $L$ are a subclass of generators of L\'evy processes, and some results which can be proved via probabilistic techniques can be recovered through PDE ones. We refer the reader to the book \cite{bertoin} for an extensive study of L\'evy processes. The idea of using results in the extension to obtain some on the boundary is already presented in \cite{Caffarelli_2007}. Since the extension by Caffarelli and Silvestre involves $A_2$ weights and a rather satisfactory basic theory for those is available in the literature (see e.g., \cite{FKS,FJK,FJK2} ), one deduces a wealth of results such as Harnack principles and regularity of solutions for equations of the type $(-\Delta)^s u=0$ with $s \in (0,1)$. Notice that similar results have been obtained for powers of the heat operator in \cite{2019arXiv191105619B,BG,STparab,AudritoTerracini}.  In \cite{ChenKumagaiWang, ChenKumagaiWang2,ChenKumagaiWang3}, Chen, Kumagai and Wang developed a far reaching theory of parabolic Harnack inequalities for {\sl non-local Dirichlet forms}. Such Dirichlet forms are typically associated to generators like $(-\Delta)^s$ for $s \in (0,1)$.  Building on many important works over many years, the authors prove some precise equivalences between some versions of the parabolic Harnack inequality for the generator and various sharp heat kernel bounds (among other results). In the current paper, our approach is to use the interplay between the extended problem (and properties of weak solutions on it) and the trace of its solutions which are by construction weak solutions of $L^s$. This allows to prove many results for $L^s$ out of results on the corresponding extended problem. Of course, as it is now well understood, such technique works only for pure powers of suitable generators and one cannot deal as in \cite{ChenKumagaiWang, ChenKumagaiWang2,ChenKumagaiWang3} with general non-local Dirichlet forms.

In \cite{stingatorrea}, Stinga and  Torrea develop a framework for powers of a general non-negative self-adjoint operator $L$ on $L^2(\Omega, d\eta)$, with $\Omega$ being an open set in $\R^n$ and $d\eta$ a positive measure. Their approach, based on transference principles, only requires the existence of a heat kernel and semi-group theory. It covers a much wider class of setups than the approach developed by Caffarelli and Silvestre in their original paper. Similarly, Kwasnicki and Mucha \cite{Kwasnicki2018} discussed the extension problem for complete Bernstein functions of the Laplacian, other than just fractional powers. 

One of the goals of the paper is to provide a unified framework of the extension theorem in the context of Dirichlet spaces. Formally, the approach of Stinga and Torrea is extremely flexible and provides some classical notions on Dirichlet spaces are known,  their proof adapts {\sl verbatim}. It is also clear that the construction of  Kwasnicki and Mucha in our context (see \cite{Kwasnicki2018} for the Euclidean case) allows functions of the generator can also be performed. 

Another contribution of this paper is to investigate H\"older regularity and Harnack inequalities for weak solutions of $(-L)^s u=0$, $0<s<1$, on $X$ where $X$ is a Dirichlet space, with a generator $L$ having sub-Gaussian estimates. Examples of such a framework include a large class of fractals. {Such a result could be deduced  from \cite{CKTW}. However, to broaden the scope of the result and appeal to PDE arguments, we provide a proof using an extension method, in the spirit of Caffarelli and Silvestre \cite{Caffarelli_2007} or Stinga and Torrea \cite{stingatorrea}. }

We now describe our setup and refer the reader to next section for precise definitions. Consider a Dirichlet space $(X,d,\mu,\mE, \mF)$. Assume that it is strongly local and regular. Let $-L$ be the associated generator to the form $\mE$ and assume that the associated  heat kernel $p_t$ satisfies sub-Gaussian estimates, namely  
	\begin{align}\label{eq:subGaussian-intro}
		\frac{c_1}{t^{d_H/d_W}}\text{exp}\brak{-c_2\brak{\frac{d(x,y)^{d_W}}{t}}^{\frac{1}{d_W-1}}} \leq p_t(x,y) \leq \frac{c_3}{t^{d_H/d_W}}\text{exp}\brak{-c_4\brak{\frac{d(x,y)^{d_W}}{t}}^{\frac{1}{d_W-1}}}
	\end{align}
	holds for $\mu\times \mu$-a.e. $(x,y)\in X \times X$ and each $t\in(0, +\infty)$, where $c_1, c_2, c_3, c_4 > 0$ and $d_H >0$, $d_W \in [2, + \infty)$ are constants independent of $x, \ y$ and $t$. The constant $d_H$ is the so-called Hausdorff dimension of $X$ and $d_W$ is the walk dimension. 
	
%

Our main result is the following theorem on regularity properties of solutions of $(-L)^s u =0$. 
\begin{theorem}\label{thm:EHI_weak_solution_Ls}
	Suppose the sub-Gaussian heat kernel estimates above \eqref{eq:subGaussian-intro} hold on the Dirichlet space $(X,d,\mu,\mE, \mF)$. Consider a non-negative weak solution $f$ of $(-L)^sf = 0$ where $-L$ is the generator of $\mE$ and the parameter range is $s \in (0,1)$. 
	
	Then the following holds: the function $f$ satisfies a {\sl global} Harnack inequality 
	\begin{equation}\label{eq:X_elliptic_HI}
		\sup_{B(x_0, R)} f \leq C \inf_{B(x_0, R)}f
	\end{equation}
	for any $x_0\in X$ and $R > 0$. Moreover, the following H\"{o}lder estimate holds, 
	\begin{equation}
		\abs{f(x) - f(y)} \leq C \left(\frac{d(x, y)^{2/d_W}}{R}\right)^\alpha \osc_{B(x_0, R)}f
	\end{equation}
	for $\mu$-almost all $x, y\in B(x_0, R)$.
\end{theorem}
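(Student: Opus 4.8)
The plan is to deduce everything from second-order, \emph{local} estimates on a Caffarelli--Silvestre-type extension, using the extension theorem established earlier in this paper. Write $a=1-2s\in(-1,1)$. A weak solution $f$ of $(-L)^s f=0$ on $X$ lifts to a function $u$ on $X\times(0,\infty)$, finite in the energy space attached to the weight $y^{a}$, which solves the degenerate equation $\partial_{yy}u+\tfrac{a}{y}\partial_y u=Lu$, has trace $f$ on $\{y=0\}$, and has vanishing weighted conormal derivative $-\lim_{y\to0^+}y^{a}\partial_y u=c_s(-L)^s f=0$ (being a weak solution of $(-L)^s f=0$ is, via the extension, exactly this vanishing). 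Moreover, the Poisson-type representation of $u$ together with the sub-Markov property of $e^{-tL}$ gives $f\ge0\Rightarrow u\ge0$. Thus the nonlocal problem on $X$ is converted into a local, albeit degenerate, elliptic problem on the higher-dimensional space.

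First I would reflect evenly across $\{y=0\}$. Because the conormal derivative vanishes, $\tilde u(x,y):=u(x,|y|)$ is a nonnegative weak solution, on all of $\widehat X:=X\times\R$, of the local equation associated with the strongly local regular Dirichlet form on $L^2\!\big(\widehat X,\ \mu\otimes|y|^{a}\,dy\big)$: away from $\{y=0\}$ this is the extension equation, while across $\{y=0\}$ it follows from the even symmetry of $\tilde u$ and the vanishing of the conormal derivative. So it suffices to obtain an elliptic Harnack inequality and oscillation decay for $\tilde u$ on $\widehat X$ and then restrict to the slice $\{y=0\}$, on which $\tilde u=f$ and on which $\widehat X$-balls centered on the slice intersect $\{y=0\}$ in $X$-balls.

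The heart of the argument is to show that this weighted extension space $\widehat X$ --- equipped with $\widehat\mu=\mu\otimes|y|^{a}\,dy$ and a metric adapted to the anisotropic scaling of the extension equation (walk dimension $d_W$ on the $X$-factor, walk dimension $2$ in $y$) --- itself supports the De Giorgi--Nash--Moser machinery: volume doubling, a scaled Poincaré inequality, and a cutoff Sobolev/capacity inequality, equivalently a parabolic Harnack inequality together with (sub-)Gaussian-type heat kernel bounds. The inputs are: (i) the weight $|y|^{a}$ with $a\in(-1,1)$ is admissible, of Muckenhoupt $A_2$-type, which controls the singular set $\{y=0\}$; and (ii) the structural inequalities on the $X$-factor, which hold by the assumption \eqref{eq:subgaussian-intro} through the known characterizations of sub-Gaussian heat kernel bounds in terms of volume doubling, Poincaré, and cutoff Sobolev inequalities, combined with the elementary one-dimensional weighted estimates in $y$. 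This is the step I expect to be the main obstacle: propagating the cutoff Sobolev/capacity inequality through an \emph{anisotropic} product and verifying it holds uniformly up to $\{y=0\}$. The fact that the base is merely sub-Gaussian rather than Gaussian makes the space--time scaling of the extension non-classical, and this is the genuinely new point compared with the Euclidean $A_2$-weighted theory underlying the work of Caffarelli and Silvestre.

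Granting the preceding, Theorem~\ref{thm:EHI_weak_solution_Ls} follows. The elliptic Harnack inequality on $\widehat X$, applied to $\tilde u\ge0$ on balls centered at $(x_0,0)$ and restricted to $\{y=0\}$, yields the global inequality \eqref{eq:X_elliptic_HI}. Iterating the Harnack inequality on $\widehat X$ produces geometric oscillation decay for $\tilde u$; restricting this again to $\{y=0\}$ gives the Hölder estimate, with the dependence on $d_W$ in the modulus of continuity dictated by the parabolic scaling relating the $y$-variable to the $X$-variable in the extension problem. Finally, since weak solutions of the degenerate extension equation admit a continuous representative, a routine covering argument upgrades these statements to the claimed "$\mu$-almost all $x,y$" form, completing the proof.
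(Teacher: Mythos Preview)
Your overall architecture---lift $f$ to its Caffarelli--Silvestre extension, reflect evenly across $\{y=0\}$ using the vanishing weighted conormal derivative, run a local regularity theory on the extended space, and restrict back to the slice---is exactly the paper's strategy. The difference, and the point where your proposal has a real gap, is the ``heart of the argument''.

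You propose to equip $\widehat X=X\times\R$ with ``a metric adapted to the anisotropic scaling'' and then verify volume doubling, Poincar\'e, and cutoff Sobolev so as to invoke the standard equivalence with (sub-)Gaussian heat kernel bounds and a parabolic Harnack inequality. The paper explains why this cannot succeed in the naive sense: with $d_W>2$ on the base and walk dimension $2$ in $y$, there is \emph{no} metric on $\widehat X$ for which the extended heat kernel has the classical sub-Gaussian two-sided form; indeed, a genuine PHI on metric balls of $\widehat X$, together with volume doubling, would force Gaussian-type bounds back on $X$, contradicting the standing assumption $d_W>2$. (Concretely, the candidate quasi-distance $\max\{d(x,x')^{d_W/2},|y-y'|\}$ whose balls are the right anisotropic products fails the triangle inequality once $d_W>2$.) So the standard VD + PI + CS $\Leftrightarrow$ PHI package, which is formulated for a single space--time scaling, does not apply, and you do not indicate how to rebuild that equivalence in a genuinely anisotropic setting.

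The paper bypasses this entirely. It never tries to fit $\widehat X$ into the metric-measure Harnack machinery. Instead it uses that the extended heat kernel \emph{factors} as a product (sub-Gaussian in $X$, weighted Gaussian in $y$), giving explicit two-sided bounds (HKE-a). From these it proves directly a weak local lower estimate (LLE) for the Dirichlet heat kernel on the \emph{product} domains $D(z_0,R)=B(x_0,R^{2/d_W})\times B(y_0,R)$, then an oscillation inequality and an \emph{inhomogeneous} parabolic Harnack inequality on the corresponding space--time cylinders $(0,(\varepsilon R)^2)\times D(z_0,R)$, via the super-mean-value inequality for caloric functions. One then sets $\tilde U(t,x,y)=U(x,y)$, constant in $t$, applies this inhomogeneous PHI and the oscillation/H\"older estimate, and restricts to $\{y=0\}$; the restriction of $D(z_0,R)$ to that slice is $B(x_0,R^{2/d_W})$, which is where the $d(x,y)^{2/d_W}$ in the H\"older modulus comes from. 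Your proposal would benefit from replacing the PI + cutoff Sobolev step by this direct heat-kernel route on anisotropic product cylinders.
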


The previous theorem has been proved in \cite{gianmarco} in the case of Cheeger spaces. Our aim here is to provide a proof of H\"older regularity and Harnack inequality in a Dirichlet setting encompassing fractals.

We combine several techniques, but the main result, the Harnack inequality, is not a direct consequence of the extension, as in the more regular space version. This fact is due to that in our setting the lifted space after the extension does not support sub-Gaussian heat kernel estimates any more. This means that the Harnack inequality cannot be obtained via standard methods in the extended space since, combined to the volume doubling property, it would imply Gaussian estimates on $X$, hence a contradiction with our standing assumptions. Instead, in the extended space, we derive a {\sl partial or anisotropic} Harnack inequality on products of balls with anisotropic scalings for the variables. Passing to the trace it leads the desired Harnack inequality for the {\sl elliptic} original PDE.  {We would like to emphasize that this partial Harnack inequality reflects the geometry of $X$ {\sl vs } the one of its lift $X \times (0,\infty)$. }

As already alluded, results like Theorem \ref{thm:EHI_weak_solution_Ls} pertain to a theory which has been developed in greater generality in \cite{ChenKumagaiWang, ChenKumagaiWang2,ChenKumagaiWang3}. In this paper, the authors work directly at the level of the non-local Dirichlet form and make full use of the probabilistic properties of the jump process associated with the Dirichlet form. This creates very serious difficulties and the results they prove build on many previous contribution, for instance \cite{Barlow2006Stability,BBK}. On the other hand, our result can be obtained through simpler and perhaps arguments, at the price of restricting the type of jump process under consideration, and sheds some light on the dichotomy between the {\sl local} Dirichlet form on the extended space and the {\sl non-local} on $X$. In view of this, the inhomogeneous partial Harnack inequality happens to be the correct object in this setting.

Finally, we would like to mention that,  there are other strategies to prove H\"older regularity and/or Harnack inequalities in the case of our class of jump processes. Indeed,  one can consider directly the non-local equation expressing the operator in integral form so that in this case we fall into the framework of \cite{ChenKumagaiWang}, but  using PDE techniques recently developed to handle directly non-local equations at the global level, i.e. without using the extension and incorporating the contributions of long range and jump effects of the process. In our general framework, this approach is somehow more involved to implement. Indeed a straightforward computation based on the spectral theorem shows that under natural assumptions, one can show that an integral expression of $(-L)^s$ can be obtained as 
\[
(-L)^s f (x) =P.V. \int_X K(x,y) (f(y)-f(x)) d\mu(y)
\]
with
\[
K(x,y)=\int_0^{+\infty} p_t(x,y)  \frac{dt}{t^{1+s}}
\]
and where $p_t$ is the heat kernel associated to $-L$. Under additional assumptions on the heat kernel $p_t$ and using the previous formula for the Kernel $K$, one  would obtain some {\sl a priori} bounds (and regularity) on $K$, which are reminiscent of some ellipticity conditions.  This opens the way to use arguments as in e.g. \cite{SilvestreIUMJ,dCKP1,dCKP2}. A more precise version of the previous computation can be found in \cite{ACM} in the context of some Riemannian manifolds . It is however not completely clear how to derive the formula in \cite{ACM} in our general set up since it relies on the Hadamard parametrix, which is a very Riemannian construction. We refrain from using a purely nonlocal approach since our method via extension offers perspectives on the different behaviours in the lifted space.

\section{Preliminaries on Dirichlet Spaces}\label{sec:Dirichlet_spaces}


\subsection{Basic definitions on Dirichlet spaces}
Here we provide an introduction to Dirichlet spaces. We refer the reader to \cite{DirichletFormsandSymmetricMarkovProcesses} for more details.
Let $(X,d)$ be a locally compact, complete, separable, metric space equipped with a Radon measure $\mu$ with full support. Let $(\mE,\mF = \mD(\mE))$ be a densely defined, non-negative, symmetric bilinear form on $\LX$. Note that 
$$(u,v)_\mF = (u,v)_{\LX} + \mE(u,v)$$
is an inner product on $\mF$. 
Then we can define the norm on $\mF$ by Cauchy-Schwarz inequality, 
$$\norm{u}_{\mF} = \brak{\mE(u,u) + \norm{u}_{\LX}^2}^{1/2}.$$
We say $\mE$ is $closed$ if $\mF$ is complete with respect to the norm $\norm{\cdot}_\mF$.
Given $\mE$ is closed, we say it is $Markovian$ if 
$$
u \in \mF, \,v \text{ is a normal contraction of  } u \Rightarrow v \in \mF, \,\mE(v,v) \leq \mE(u,u).$$
Here a function $v$ is called  a  $normal\  contraction$ of a function $u$, if 
$$
\abs{v(x)-v(y)} \leq \abs{u(x)-u(y)},\ \forall x,y\in X, \ \abs{v(x)}\leq \abs{u(x)},\ \forall x \in X.
$$
\begin{definition}
We say $(\mE,\mathcal{F} = \mathcal{D}(\mE))$ is a Dirichlet form on $L^2(X,\mu)$, if $\mE$ is a densely defined, closed, non-negative,  symmetric and Markovian bilinear form on $L^2(X,\mu)$. 
\end{definition}

By \cite[Theorem 1.3.1]{DirichletFormsandSymmetricMarkovProcesses}, we can define the generator for a Dirichlet form.
\begin{definition}
There is a one-to-one correspondence between the family of closed symmetric forms $\mE$ on $L^2(X, \mu)$ and the family of non-positive definite self-adjoint operators $L$ on $L^2(X, \mu)$. The correspondence is determined by
\begin{align*}
    \begin{cases}
    \mathcal D\brak{\mE} = \mathcal{D}\brak{\sqrt{-L}}\\
    \mE(u,v) = (\sqrt{-L}u,\sqrt{-L}v)
    \end{cases}
\end{align*}
$L$ is called the generator of the Dirichlet form $\mE$.
\end{definition}
Since $L$ is non-positive self-adjoint, by the spectral theorem, there exists a unique spectral measure $dE(\lambda)$, such that 
$$ -L = \int_0^\infty \lambda dE(\lambda).$$
This formula is understood in the following sense: for any functions $f,g\in \mathcal{D}\brak{L}$, we have 
$$\ang{-Lf, g} = \int_0^\infty \lambda dE_{f,g}(\lambda),$$
where $dE_{f,g}$ is a well-defined Radon measure on $X$.
In particular, for any non-negative continuous function $\phi$ on $[0,\infty)$, we can define
\begin{align}
    \begin{cases}
    \phi(-L) = \int_0^\infty \phi(\lambda)dE(\lambda),\\
    \mathcal{D}\brak{\phi(-L)} = \crl{u\in \mF : \int _0 ^\infty \phi(\lambda)^2 dE_{u,u}(\lambda)<\infty}. 
    \end{cases}
\end{align}
We will in particular, consider fractional powers of the generator $L$, which are then defined by
\begin{align}
	\ang{(-L)^sf, g } =   \int_0^\infty \lambda^s dE_{f,g}(\lambda),
\end{align}
and the domain of $(-L)^s$ is given by 
\begin{align}
	\mathcal{D} \brak{(-L)^s} = \crl{u\in \mF : \int _0 ^\infty \lambda^{2s} dE_{u,u}(\lambda)<\infty}. 	
\end{align}
Similarly, we can define the semigroup associated with $L$ as $P_t = e^{tL}$. This defines a strongly continuous semigroup on $L^2(X,\mu)$ which is called the heat semigroup associated with the Dirichlet form $\mathcal{E}$. Though originally defined on $L^2(X,\mu)$, it turns out that for every $1 \le p \le +\infty$ the semigroup can be extended from $L^2(X,\mu) \cap L^p(X,\mu)$ to $L^p(X,\mu)$, defining then for all $p$ a contraction semigroup $L^p(X,\mu) \to L^p(X,\mu)$. This semigroup is Markovian, i.e. if $0 \le f \le 1$ $\mu$-a.e., then for every $t \ge 0$,  we have $\mu$-a.e. $0 \le P_t f \le 1$.  Associated to $P_t$, there is always a heat kernel measure, which is a family of measures $p_t(x,dy)$ defined on $X$ for every $x \in X$ and that satisfies
\[
P_tf(x)=\int_X f(y) p_t(x,dy), \quad f \in L^\infty (X,\mu), t >0.
\]
If $p_t(x,dy)=p_t(x,y) d\mu(y)$ for some measurable function $p: (0+\infty) \times X \times X \to \mathbb{R}$, then this function $p_t(x,y)$ is called the heat kernel of the Dirichlet form.

We denote by $C_c(X)$ the space of all continuous functions with compact support in $X$ and $C_0(X)$ its closure with respect to the supremum norm. The following definitions hold:

\begin{definition}
A $core$ of $\mE$ is a subset $\mathcal{C}$ of $\F \cap C_0(X)$ such that $\mathcal{C}$ is dense in $\F$ with the norm $\norm{\cdot}_\F$ and dense in $C_0(X)$ with the supremum norm.

\begin{enumerate}
\item A Dirichlet form $\mE$ is called $regular$ if it admits a core. 
\item A Dirichlet form $\mE$ is called $local$ if for any $u, v \in \F$  with disjoint compact support, then $\mE(u,v) = 0$. $\mE$ is called non-local if this property does not hold.
\item A Dirichlet form $\mE$ is called strongly  local if for any $u,v\in\F$ with compact support, $v$ is constant on a neighborhood of the support of $u$, then $\mE(u,v) = 0$.
\end{enumerate}

\end{definition}

We now assume throughout the paper that $(\mE,\mathcal{F})$ is a strongly local regular Dirichlet form on $\LX$.   Since  $\mE$ is regular, the following holds

\begin{definition}
Suppose $\mE$ is a regular Dirichlet form, for every $u,v\in \mathcal F\cap L^{\infty}(X)$, the energy measure $\Gamma (u,v)$ is defined  through the formula
 \[
\int_X\phi\, d\Gamma(u,v)=\frac{1}{2}[\mE(\phi u,v)+\mE(\phi v,u)-\mE(\phi, uv)], \quad \phi\in \mathcal F \cap C_c(X).
\]
\end{definition}
Note that $\Gamma(u,v)$ can be extended to all $u,v\in \mathcal F$ by truncation (see \cite[Theorem 4.3.11]{SymmetricMarkovProcessesTimeChangeandBoundaryTheory}). According to Beurling and Deny~\cite{beurling1958}, one has then   for $u,v\in \mathcal{F}$
\[
\mathcal E(u,v)=\int_X d\Gamma(u,v)
\]
and $\Gamma(u,v)$ is a signed Radon measure.
%
%
%
%
%

\subsection{Tensorization of Dirichlet spaces}\label{tensorization Dirichlet space}

Let  $(\tilde{\mE},\tilde{\mathcal{F}})$ be  a regular Dirichlet form on $L^2(Y,m)$ where $(Y,m)$ is a metric measure space that satisfies the same conditions as $(X,\mu)$. One can  define a Dirichlet form on the product space $(X \times Y, \mu \otimes m)$ as follows. 

Denote by $\tilde{L}$ the generator of the Dirichlet form $(\tilde{\mE},\tilde{\mathcal{F}})$. One defines an operator $\mathcal{L}$ on $\mathcal{D}(L) \otimes \mathcal{D}(\tilde{L})$ by
\[
\mathcal{L} ( f\otimes g)= (Lf ) \otimes g + f \otimes \tilde{L}g
\]
By linearity, this operator is extended to the pre-domain
\[
\left\{ \sum_{i=1}^n f_i \otimes g_i : n \in \mathbb{N}, f_i \in  \mathcal{D}(L), g_i \in  \mathcal{D}(\tilde L)  \right\}
\]
The operator $\mathcal{L}$ is then non-positive and symmetric. It admits therefore a minimal self-adjoint extension $(\mathcal{L}, \mathcal{D}(\mathcal{L}))$. Using spectral theorem as before, $(\mathcal{L}, \mathcal{D}(\mathcal{L}))$ is the generator of a strongly continuous semigroup $P^{X,Y}_t$ on $L^2(X \times Y, \mu \otimes m)$. This semigroup is Markovian and can be represented as
\[
P^{X,Y}_t f (x,y) =\int_X \int_Y f(z_1,z_2) \tilde{p}_t(y, dz_2) p_t(x, dz_1), \quad t >0, f \in L^\infty(X\times Y, \mu \otimes m)
\] 
where $\tilde{p}_t$ is the heat kernel measure of $(\tilde{\mE},\tilde{\mathcal{F}})$. The Dirichlet form $(\mathcal{E}^{X,Y}, \mathcal{F}^{X,Y}:=\mathcal{D}(\sqrt{-\mathcal{L}}))$ on the product space is  the Dirichlet form associated with this semigroup:
\[
\mathcal{E}^{X,Y}(f,f)=\lim_{t \to 0} \frac{1}{t} \langle f -P^{X,Y}_t f, f\rangle_{L^2(X \times Y, \mu \otimes m)}=\| \sqrt{-\mathcal{L}} f \|^2_{L^2(X \times Y, \mu \otimes m)}.
\]
Clearly, for $f \in \mathcal{F}$, $g \in \tilde{\mathcal{F}}$, $f\otimes g \in \mathcal{F}^{X,Y}$ and
\[
\mathcal{E}^{X,Y}(f\otimes g,f\otimes g)=\mathcal{E}( f) \| g \|_{L^2(Y,m)}^2 +\| f \|_{L^2(X,\mu)}^2\tilde{\mathcal{E}}(g).
\]
Finally, the Dirichlet form  $(\mathcal{E}^{X,Y}, \mathcal{F}^{X,Y})$ is seen to be regular, because if $\tilde{\mathcal{C}}$ is a core for $(\tilde{\mE},\tilde{\mathcal{F}})$ then
\[
\left\{ \sum_{i=1}^n f_i \otimes g_i : n \in \mathbb{N}, f_i \in \mathcal{C}, g_i \in \tilde{\mathcal{C}} \right\}
\]
is a core for $(\mathcal{E}^{X,Y}, \mathcal{F}^{X,Y})$.

\subsection{Weak solutions associated to generators}

We introduce the following spaces. For a domain $\Omega \subset X$, define
\begin{align}
&\mathcal{F}_{loc}(\Omega)=\left\{ f \in L^2_{loc}(\Omega), \text{ for every relatively compact } V \subset \Omega , \, \exists f^* \in \mathcal{F}, \, f^*_{\mid V}=f_{\mid V}, \, \mu \, a.e. \right\}, \label{def:F_loc}\\
&\mathcal{F}_{c}(\Omega)=\left\{ f \in \F: \text{The essential support of $f$ is compact in $\Omega$}\right\},\label{def:F_c}\\
&\mathcal{F}^0(\Omega)= \text{The closure of $\F_c(\Omega)$ with respect to the norm } \|f\|_\F.\label{def:F^0}
\end{align}

\begin{remark}
For $f,g \in \mathcal{F}_{loc}(\Omega)$, on can define $\Gamma (f,g)$ locally by $\Gamma (f,g)_{\mid V}=\Gamma( f^*_{\mid V},g^*_{\mid V}) $.

\end{remark}

\begin{definition}\label{def:harmonic_functions}
For a domain $\Omega \subset X$, a  function $f \in \mathcal{F}_{loc}(\Omega)$ is called harmonic in $\Omega$ if for every function $h \in \mathcal{F}$ whose essential support is included in $\Omega$, one has
\[
\mE (f, h)=0.
\]
In particular, $f$ is also called a $weak \ solution$ for $Lf = 0$ in $\Omega$, since $\mE(f, h) = \ang{-Lf, h}$.
\end{definition}

Similarly, we have
\begin{definition}\label{def:weak_solution_Ls}
For a domain $\Omega \subset X$, a function $f \in \mathcal{D}\brak{(-L)^s}$ is called a weak solution for $(-L)^s f = 0$, if for every function $h \in \mathcal{F}$ whose essential support is included in $\Omega$, one has
\[
 \ang{(-L)^sf, h}=0.
\]
\end{definition}

\subsection{Sub-Gaussian heat kernel estimate}

%
%
%

{The following definition is our main assumption on the Dirichlet space $X$. We refer to  \cite{barlow2012equivalence} and  \cite{grigor2003heat,MR2962091,hkmma} for more details.}
\begin{definition}\label{def:heat_kernel_estimate}
	Let $(X, d, \mu, \mE, \F)$ be a Dirichlet space with generator $L$. Let $P_t$ be the heat semi-group associated with $L$. We say the kernel $p_t(x,y)$ of $P_t$ satisfies sub-Gaussian estimates if
	\begin{align}\label{eq:subGaussian}
		\frac{c_1}{t^{d_H/d_W}}\text{exp}\brak{-c_2\brak{\frac{d(x,y)^{d_W}}{t}}^{\frac{1}{d_W-1}}} \leq p_t(x,y) \leq \frac{c_3}{t^{d_H/d_W}}\text{exp}\brak{-c_4\brak{\frac{d(x,y)^{d_W}}{t}}^{\frac{1}{d_W-1}}}
	\end{align}
	holds for $\mu\times \mu$-a.e. $(x,y)\in X \times X$ and each $t\in(0, +\infty)$, where $c_1, c_2, c_3, c_4 > 0$, $d_H >0$ and $d_W \in [2, + \infty)$ are constants independent of $x, \ y$ and $t$.
\end{definition}

{We point out that most of our results will be subject to assuming Estimates \eqref{eq:subGaussian}. By Theorem $3.1$ in \cite{hkmma}, the estimates \eqref{eq:subGaussian} imply that the measure $\mu$ satisfies the following David-Ahlfors bounds  } 
\begin{equation}\label{eq:ball_size}
	\mu(B(x, r)) \asymp  r^{d_H}
\end{equation}
where $B(x, r)$ is the open metric ball with center $x$ and radius $r$ in $(X, d)$. A notable consequence of the David-Ahlfors regularity of the measure is the volume doubling property:
\[
\mu(B(x,2 r)) \le C \mu(B(x, r)),
\]
and a classical argument gives then that there exists $C_{VD}, \gamma >0$ such that 
\begin{equation}\label{eq:VD_gamma}
    V(x, R) \leq C_{VD} V(y, r) \brak{\frac{d(x, y) + R}{r}}^\gamma, \text{ for all } x, y\in X, 0<r\leq R.
\end{equation}
where $V(x,R)$ is the volume of the ball $B(x,R)$.

There are many examples of Dirichlet spaces where the heat kernel satisfies the estimates \eqref{def:heat_kernel_estimate}. In particular a large family of examples of Dirichlet spaces for which sub-Gaussian estimates hold are the p.c.f. fractals like the unbounded Vicsek set (see \ref{fig-V}), the unbounded Sierpi\'nski gasket (see \ref{fig-SG}) or the infinite Sierpi\'nski carpet (see \ref{fig-SC}). We refer to the lecture notes  \cite{MR1668115} by M. Barlow where those examples are discussed in details. 

\begin{figure}[htb]\centering
\includegraphics[trim={63 83 180 201},clip,height=0.20\textwidth]{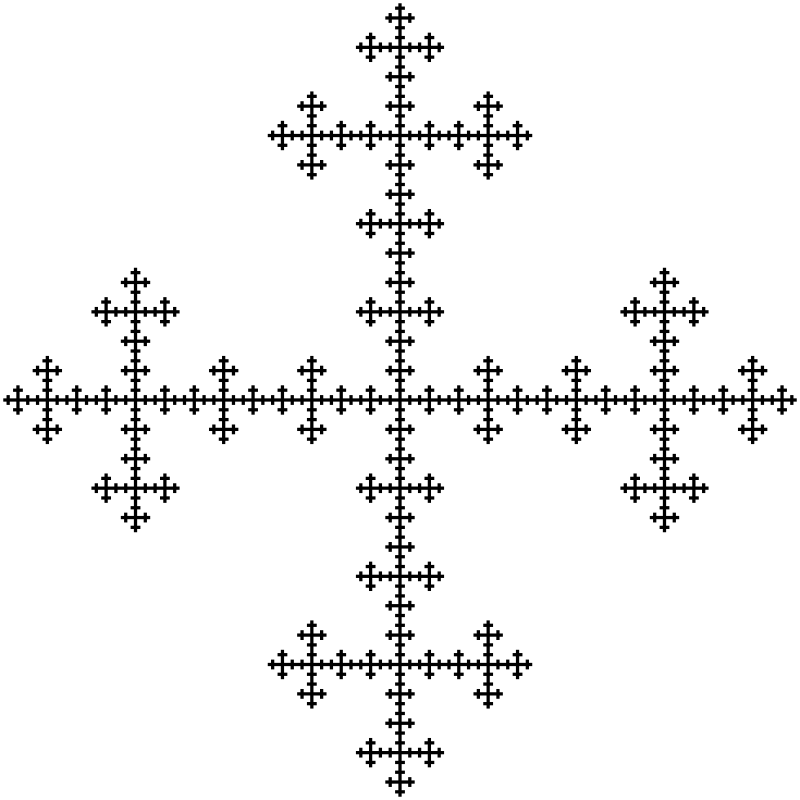}
\caption{A part of an infinite, or unbounded, Vicsek set.}\label{fig-V} 
\label{fig-Vicsek}
\end{figure}

 \begin{figure}[htb]\centering
 	\includegraphics[trim={83 0 169 63},clip,height=0.20\textwidth]{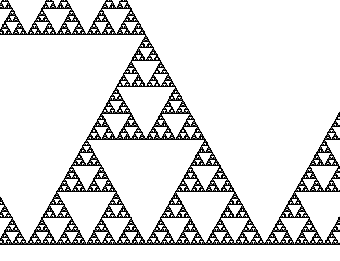}
 	\caption{A part of an infinite, or unbounded, Sierpinski gasket.} \label{fig-SG}
 \end{figure}

  \begin{figure}[htb]
  	\centering
  	\includegraphics[trim={8 23 38 70},clip,height=.20\textwidth] {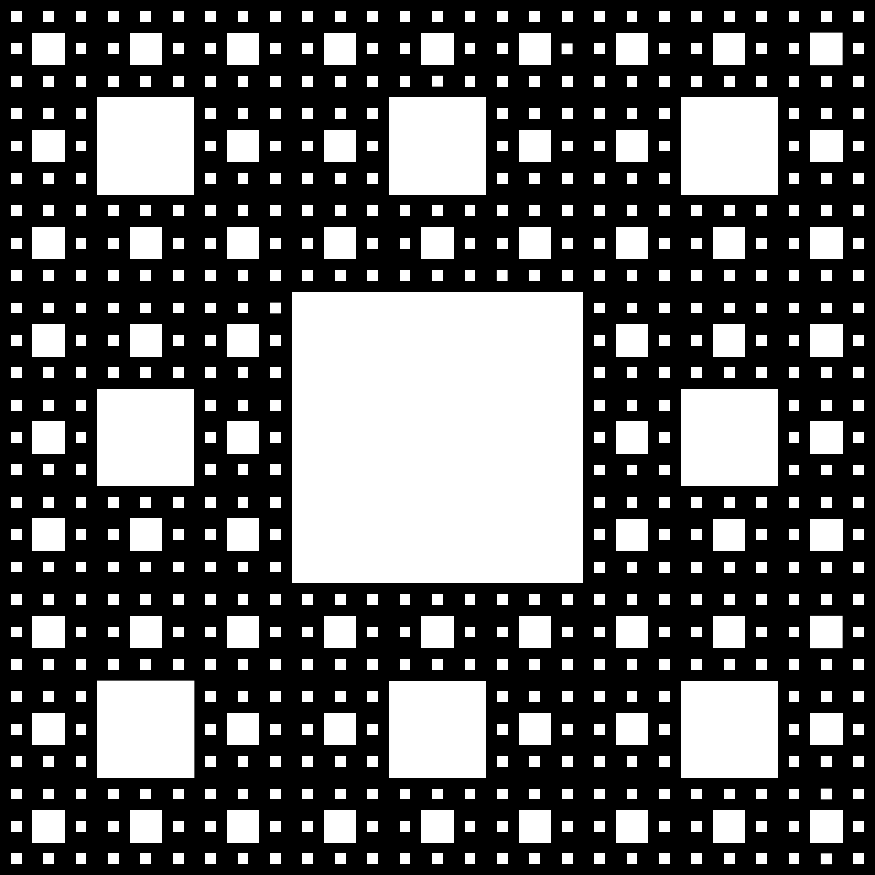}
  	\caption{A part of an infinite, or unbounded, Sierpinski carpet.}\label{fig-SC}
  \end{figure}

\subsection{Caloric functions}
We need to define what it means that a function $u(t, x)$ is a caloric function in a cylinder $I\times \Omega$, where $I$ is an interval in $\R$ and $\Omega$ is an open subset of $X$. We start by the weak differentiability.

\begin{definition}
	Let $I$ be an interval in $\R$. We say that a function $u:I\to L^2(\Omega)$ is weakly differentiable at $t_0 \in I$ if for any $f \in L^2(\Omega)$, the function $(u(t), f)$ is differentiable at $t_0$, (where the brackets stand for the inner product in $L^2(\Omega)$). That is, the following limit exists.
	\begin{equation}
		\lim_{t \to t_0} \brak{\frac{u(t) - u(t_0)}{t - t_0}, f}
	\end{equation}
\end{definition}

And then,
\begin{definition}
	Let $(X, d, \mu, \mE, \mF)$ be a Dirichlet space. Consider a function $u:I\to \mF$, and let $\Omega$ be an open subset of $X$. We say that $u$ is a $subcaloric$ function in $I\times \Omega$ if $u$ is weakly differentiable in the space $L^2(\Omega)$ at any $t \in I$ {(in the sense of  the previous definition)} and for any non-negative $f \in \mF_{loc}(\Omega)$ and for any $t \in I$, 
	\begin{equation}\label{eq:subcaloric}
		(u', f) + \mE(u, f) \leq 0.
	\end{equation}
	Similarly, one defines the notions of supercaloric functions and caloric functions. For the later, the inequality in \eqref{eq:subcaloric} becomes equality for all $f \in \mF_{loc}(\Omega)$.
\end{definition}

We have the super-mean value inequality for caloric functions, and the proof can be found in \cite[Corollary 2.3]{barlow2012equivalence}.
\begin{corollary}[Super-mean value inequality]\label{cor:super_mean_value}
	Let $f \in L^2_+(\Omega)$ and $u$ be a non-negative supercaloric function in $(0, T) \times \Omega$ such that $u(t, \cdot)\xrightarrow{L^2(\Omega)}f$ as $t \to 0$. Then for any $t \in (0, T)$, 
	\begin{equation}
		u(t, \cdot) \geq P_t^\Omega f \text{ in } \Omega.
	\end{equation}
	In particular, for all $0 < s < t < T$, 
	\begin{equation}\label{eq:SMV_2}
		u(t, \cdot) \geq P_{t-s}^\Omega u(s, \cdot) \text{ in } \Omega
	\end{equation}
\end{corollary}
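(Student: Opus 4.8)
The statement to prove is the Super-mean value inequality (Corollary 2.3 in the cited reference): for $f\in L^2_+(\Omega)$ and $u$ a non-negative supercaloric function in $(0,T)\times\Omega$ with $u(t,\cdot)\to f$ in $L^2(\Omega)$ as $t\to 0$, we must show $u(t,\cdot)\ge P_t^\Omega f$ in $\Omega$ for every $t\in(0,T)$, and then deduce the time-shifted version \eqref{eq:SMV_2}.

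\smallskip

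\emph{Step 1: compare $u$ with the solution of the Dirichlet heat equation.} The plan is to fix $0<t_0<T$ and let $w(t,\cdot)=P_t^\Omega f$ be the caloric function in $(0,T)\times\Omega$ (with zero boundary data, i.e. $w(t,\cdot)\in\F^0(\Omega)$) satisfying $w(t,\cdot)\to f$ in $L^2(\Omega)$ as $t\to 0$. Set $v(t,\cdot)=u(t,\cdot)-w(t,\cdot)$. Then $v$ is supercaloric in $(0,T)\times\Omega$, and $v(t,\cdot)\to 0$ in $L^2(\Omega)$ as $t\to 0$. The goal becomes $v(t_0,\cdot)\ge 0$.

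\smallskip

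\emph{Step 2: an energy/monotonicity argument for the negative part.} The standard route is to test the supercaloric inequality $(v',\varphi)+\mE(v,\varphi)\le 0$ against $\varphi=v_-=\max(-v,0)$, which is a legitimate test function in $\F_{loc}(\Omega)$ (one uses that $v_-$ has the right support properties since $w$ has zero boundary values and $u\ge 0$, so $v_-\le u$ is controlled near $\partial\Omega$; more carefully one truncates and uses Markovianity of $\mE$). Since $\mE(v,v_-)=-\mE(v_-,v_-)\le 0$ with the correct sign, one obtains
\begin{equation*}
\frac{1}{2}\frac{d}{dt}\|v_-(t,\cdot)\|_{L^2(\Omega)}^2 = -(v',v_-) \le \mE(v,v_-) \le 0,
\end{equation*}
so $t\mapsto \|v_-(t,\cdot)\|_{L^2(\Omega)}^2$ is non-increasing. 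Combined with $\|v_-(t,\cdot)\|_{L^2}\to 0$ as $t\to 0$, this forces $v_-(t,\cdot)\equiv 0$ for all $t\in(0,T)$, i.e. $u(t,\cdot)\ge P_t^\Omega f$. The inequality \eqref{eq:SMV_2} then follows by applying the first part on the interval $(s,T)$ with initial data $u(s,\cdot)\in L^2_+(\Omega)$ and using the semigroup property of $P_t^\Omega$ together with the fact that $u(t,\cdot)\to u(s,\cdot)$ in $L^2(\Omega)$ as $t\downarrow s$ (weak differentiability in $t$ gives strong $L^2$-continuity in $t$).

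\smallskip

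\emph{Main obstacle.} The delicate point is justifying that $v_-$ (or a suitable truncation of it) is an admissible test function in the definition of supercaloric, i.e. that it lies in $\F_{loc}(\Omega)$ with essential support handled correctly, and that the weak time-derivative pairing $(v',v_-)$ equals $\tfrac12\tfrac{d}{dt}\|v_-\|_{L^2}^2$. This requires an approximation argument (mollify in $t$, truncate in space, pass to the limit) and careful use of the fact that $w(t,\cdot)\in\F^0(\Omega)$ while $u\ge 0$, so that the negative part $v_-$ does not "see" the boundary. Once this technical regularization is in place, the rest is the soft monotonicity argument above.
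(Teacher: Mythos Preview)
The paper does not actually prove this corollary; it simply cites \cite[Corollary~2.3]{barlow2012equivalence}. Your argument --- set $v=u-P_t^\Omega f$, test against $v_-$, and show that $t\mapsto\|v_-(t,\cdot)\|_{L^2(\Omega)}^2$ is non-increasing with zero initial value --- is exactly the standard parabolic comparison principle that underlies the cited result, so your approach matches what the paper defers to.

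Two small remarks. First, with the paper's sign convention a \emph{supercaloric} function satisfies $(v',\varphi)+\mE(v,\varphi)\ge 0$ for non-negative $\varphi$, not $\le 0$ as you wrote; tracing through with $(v',v_-)=-\tfrac12\tfrac{d}{dt}\|v_-\|_{L^2}^2$ and $\mE(v,v_-)=-\mE(v_-,v_-)\le 0$ (strong locality) still yields $\tfrac{d}{dt}\|v_-\|_{L^2}^2\le -2\mE(v_-,v_-)\le 0$, so your conclusion is unchanged. Second, your identification of the main technical point --- admissibility of $v_-$ as a test function --- is correct, and the resolution you sketch is the right one: since $u\ge 0$ and $w=P_t^\Omega f\ge 0$ lies in $\mF^0(\Omega)$, one has $0\le v_-=(w-u)_+\le w$, so $v_-\in\mF^0(\Omega)$ by the Markovian property, which is what is needed for the pairing to make sense.
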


\section{Extension Theorem on Dirichlet Spaces}\label{sec:DTN}
\subsection{Pure Fractional powers} 
In this section, we establish an extension theorem for the fractional power $(-L)^s$ of the generator of any Dirichlet form. This theorem is a direct generalization of the extension theorem in \cite{stingatorrea}. Here the fractional power $(-L)^s$ and its domain $\mathcal{D}((-L)^s)$ are defined by the spectral theorem for $0<s<1$ as introduced in Section \ref{sec:Dirichlet_spaces}.

We introduce some notations that will appear in the theorem. For any $-1<a<1$, we consider the space $\mathbb{R}$ endowed with the measure $d\nu_a = \abs{y}^a dy.$ The $Bessel \ operator$ is defined on $\brak{\mathbb{R}, d\nu_a}$ as 
\begin{align}\label{eq:Ba}
\mathcal B_a = \frac{\partial^2 }{\partial y^2} + \frac{a}{y} \frac{\partial }{\partial y}.
\end{align}
Notice that the space $\brak{\mathbb{R}, d\nu_a}$ is in an homogeneous space since the measure $d\nu_a$ is a doubling measure. The function $y \to \abs{y}^a$ (and its inverse), which is $L^1_{loc}$, is even an $A_2$ weight on $\mathbb R$,  i.e. $w$ is an $A_2-$weight if  there exists a constant $C>0$ such that any interval $I \subset \mathbb R$, the following holds 
$$
\int_I w \int_I w^{-1} \leq C |I|^2 . 
$$

Let $X_a$ denote the space $X \times \R$ endowed with the product measure $d\mu_a = d\mu d\nu_a$. We will also use $X^+_a$ to denote the space $X\times (0, +\infty)$ with the measure $d\mu_a$. The following theorem describes the extension  properties of $(-L)^s$. 

\begin{theorem}\label{thm:extension_theorem}
    Let  $f \in \mathcal{D}((-L)^s)$ and let $P_t = e^{tL}$ denote the semigroup generated by $L$. Consider the boundary value problem
    \begin{equation}\label{eq:entension_equation}
    \begin{cases}
    L_a U = (L + \mathcal B_a) U = 0\ \ \ \ \ \ \ \text{in}\ X_a^+,
    \\
    U(\cdot ,0) = f ,
    \end{cases}
    \end{equation}
    where $a = 1- 2s$, we have
    \begin{enumerate}[label=(\roman*)]
        \item The function 
\begin{align}
	U (\cdot ,y)=\frac{1}{\Gamma(s)} \int_0^{+\infty}   (P_t (-L)^s f)\cdot e^{-\frac{y^2}{4t}} \frac{dt}{t^{1-s}}.
\end{align}
is a weak solution of \eqref{eq:entension_equation}. 

        \item The following Poisson formula holds in the weak sense
\begin{align}\label{eq:Possion_formula}
U (\cdot ,y)= \frac{y^{2s}}{2^{2s}\Gamma(s) }\int_0^{+\infty} (P_t f)\cdot e^{-\frac{y^2}{4t}} \frac{dt}{t^{1+s}}.
\end{align}

        \item The Dirichlet-to-Neumann condition holds, i.e. the following equation holds weakly
\begin{align}\label{eq:D2N}
(-L)^s f = 
- \frac{2^{2s-1} \Gamma(s)}{\Gamma(1-s)} \underset{y\to 0^+}{\lim} y^a \frac{\partial U}{\partial y}(\cdot,y)
\end{align}
\end{enumerate}
The solution $U$ is called the $s$-harmonic extension of $f$.

\end{theorem}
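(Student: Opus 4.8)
\textbf{Proof proposal for Theorem \ref{thm:extension_theorem}.}

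The plan is to follow the Stinga--Torrea strategy, which only uses semigroup theory and the spectral theorem, and check that each computation goes through verbatim in the Dirichlet-space setting. First I would verify that the function $U$ in $(i)$ is well-defined as an element of $\mathcal{F}$ for each fixed $y > 0$: using $\int_0^{+\infty} e^{-y^2/(4t)} t^{s-1}\,dt = \Gamma(s)(y^2/4)^{-s}\cdot(\text{const})$ type identities together with the contraction property $\|P_t\|_{L^2\to L^2}\le 1$ and the assumption $f\in\mathcal{D}((-L)^s)$, one controls $\|U(\cdot,y)\|_{\mathcal F}$ and $\|\partial_y U(\cdot,y)\|_{\LX}$. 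The key algebraic input is that, through the spectral resolution $-L=\int_0^\infty \lambda\,dE(\lambda)$, one has $P_t(-L)^s f = \int_0^\infty e^{-t\lambda}\lambda^s\,dE(\lambda)f$, so that
\begin{equation}\label{eq:spectral_U}
U(\cdot,y) = \int_0^\infty \varphi(y,\lambda)\,dE(\lambda)f,\qquad \varphi(y,\lambda) = \frac{1}{\Gamma(s)}\int_0^{+\infty} e^{-t\lambda}e^{-\frac{y^2}{4t}}\lambda^s\,\frac{dt}{t^{1-s}},
\end{equation}
and the inner $t$-integral evaluates (via the standard subordination/Bessel-function identity) to a multiple of $(\sqrt\lambda\, y)^s K_s(\sqrt\lambda\, y)$, up to normalizing constants. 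Once $U$ is written in this spectral form, the ODE in the $y$-variable becomes the classical Bessel equation: $\mathcal B_a g = \lambda g$ is solved (in the decaying branch) exactly by $g(y)=(\sqrt\lambda\,y)^s K_s(\sqrt\lambda\,y)$ when $a = 1-2s$, so $\mathcal B_a\varphi(\cdot,\lambda) = \lambda\,\varphi(\cdot,\lambda)$, which translates into $\mathcal B_a U = -L U$ weakly, i.e. $L_a U = 0$ in $X_a^+$. To make ``weak solution'' precise one tests against $h\otimes\psi$ with $h\in\mathcal F$ of compact essential support and $\psi$ smooth compactly supported in $(0,\infty)$, and uses that $\mathcal E(U(\cdot,y),h)=\int_0^\infty\lambda\varphi(y,\lambda)\,dE_{f,h}(\lambda)$ together with Fubini to move the $y$-derivatives onto $\psi$; the strong locality of $\mathcal E$ is what guarantees the energy measure on $X_a^+$ splits as the sum of the ``horizontal'' energy of $U$ and the ``vertical'' $|\partial_y U|^2$ part.

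For $(ii)$, the Poisson representation follows from $(i)$ by a change of variables in the $t$-integral: starting from $U(\cdot,y)=\frac{1}{\Gamma(s)}\int_0^{+\infty}(P_t(-L)^s f)e^{-y^2/(4t)}t^{s-1}\,dt$, write $(-L)^s f = \frac{1}{\Gamma(1-s)}\int_0^{+\infty}(-L)(P_\tau f)\,\tau^{-s}\,d\tau$ is not quite the cleanest route; instead I would directly verify that the right-hand side of \eqref{eq:Possion_formula} has the same spectral symbol as \eqref{eq:spectral_U}, i.e. show
\begin{equation}\label{eq:poisson_symbol}
\frac{y^{2s}}{2^{2s}\Gamma(s)}\int_0^{+\infty} e^{-t\lambda}e^{-\frac{y^2}{4t}}\,\frac{dt}{t^{1+s}} = \varphi(y,\lambda)
\end{equation}
for every $\lambda>0$, which is again the same Bessel-$K$ integral evaluated with a shifted exponent; the substitution $t\mapsto y^2/(4t)$ interchanges the two exponents and produces exactly the factor $y^{2s}\lambda^{-s}2^{-2s}$, matching constants. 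Then $(ii)$ follows since two spectral multipliers agreeing on the spectrum define the same operator applied to $f$.

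For $(iii)$, the Dirichlet-to-Neumann identity, I would compute the Neumann data from the Poisson form \eqref{eq:Possion_formula}. Differentiating in $y$, multiplying by $y^a = y^{1-2s}$, and letting $y\to 0^+$, one isolates the leading singular term: expanding $e^{-y^2/(4t)}=1 - y^2/(4t)+\cdots$ inside $\frac{y^{2s}}{2^{2s}\Gamma(s)}\int_0^\infty (P_t f)e^{-y^2/(4t)}t^{-1-s}\,dt$, the divergent piece $\int_0^\infty t^{-1-s}\,dt$ near $t=0$ is precisely what, after the $y^{1-2s}\partial_y$ normalization, converges to $-\frac{\Gamma(1-s)}{2^{2s-1}\Gamma(s)}(-L)^s f$ by recognizing $(-L)^s f = \frac{1}{\Gamma(-s)}\int_0^\infty (P_t f - f)t^{-1-s}\,dt$ on $\mathcal D((-L)^s)$; keeping careful track of $\Gamma$-function constants (using $\Gamma(1-s)=-s\Gamma(-s)$) produces the stated constant $\frac{2^{2s-1}\Gamma(s)}{\Gamma(1-s)}$. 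All limits here are taken in the weak sense, i.e. after pairing with a test function $h\in\mathcal F$, so the interchange of $\lim_{y\to0^+}$ with the spectral integral is justified by dominated convergence using $f\in\mathcal D((-L)^s)$ to control $dE_{f,h}$.

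I expect the main obstacle to be purely bookkeeping rather than conceptual: namely, justifying rigorously that $U$ is a \emph{weak} solution of $L_a U=0$ in the Dirichlet-space sense of Definition \ref{def:harmonic_functions} adapted to $X_a^+$ --- this requires first knowing that the Bessel part $(\R_+,\mathcal B_a,d\nu_a)$ and its tensor product with $(X,\mathcal E,\mathcal F)$ genuinely form a (strongly local, regular) Dirichlet space so that the notion of weak solution on $X_a^+$ even makes sense, and then checking that the spectral-integral expression for $U$ lies in the corresponding form domain $\mathcal F_{loc}(X_a^+)$ with the right integrability in $y$ near $0$ (where $|\partial_y U|^2 y^a$ must be locally integrable, using $a=1-2s\in(-1,1)$ and $f\in\mathcal D((-L)^s)$). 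The Bessel ODE computations, the Fubini interchanges, and the $\Gamma$-constant chase are all routine once one grants the standard special-function identity $\int_0^\infty e^{-t\lambda - y^2/(4t)}t^{s-1}\,dt = 2(y^2/(4\lambda))^{s/2}K_s(y\sqrt\lambda)$ and its derivative; I would state this as a lemma and cite it rather than re-deriving it.
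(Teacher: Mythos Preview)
Your proposal is correct and follows essentially the same Stinga--Torrea route as the paper: both reduce everything to the spectral representation $U(\cdot,y)=\int_0^\infty \varphi(y,\lambda)\,dE(\lambda)f$ and then verify the claims symbol-by-symbol. The paper's sketch puts its emphasis on the well-definedness step---truncating the $t$-integral to $[0,R]$, bounding $\|U_R(\cdot,y)\|_{L^2}\le\|f\|_{L^2}$ via the change of variable $r=t\lambda$, and passing to the limit---whereas you take well-definedness more for granted and are instead more explicit about the Bessel-$K_s$ identification, the $t\mapsto y^2/(4t)$ substitution for (ii), and the Balakrishnan formula for (iii); but these are complementary levels of detail on the same argument, not different approaches.
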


The proof is almost the same as in \cite{stingatorrea} and we provide a sketch of it below. Our goal here is to provide the most general framework available to deal with equations seen in the extension. The previous result covers in particular the following examples, some of which were previously considered in the literature:
\begin{itemize}
\item Complete Riemannian manifolds with non-negative Ricci curvature or more generally RCD$(0,\infty)$ spaces in the sense of Ambrosio-Gigli-Savar\'e \cite{ambrosio2014}.
\item Carnot groups and other complete sub-Riemannian manifolds satisfying a generalized curvature dimension inequality (see \cite{BAUDOIN20122646,BK}). The case of Heisenberg groups was considered in \cite{FF}.  
\item Doubling metric measure spaces that support a $2$-Poincar\'e inequality with respect to the upper gradient
structure of Heinonen and Koskela (see~\cite{heinonen_koskela_shanmugalingam_tyson_2015,KOSKELA20142437,Koskela2012}).
\item Metric graphs with bounded geometry (see \cite{Haeseler}).
\item Abstract Wiener spaces  (see e.g. \cite{bogachev} for a general introduction to Gaussian spaces ). The extension theorem was introduced in  \cite{NPS1,NPS2}.
\end{itemize}

\begin{remark}
In the paper by Stinga and Torrea \cite{stingatorrea}, the previous theorem was  proved on the Euclidean case with positive measure \cite{Caffarelli_2007,stingatorrea}, Gauss spaces  and some variations of them on bounded domains. 
\end{remark}

\begin{remark}
It is clear that the arguments in \cite{Kwasnicki2018} can be extended to a large class of Dirichlet spaces, in exactly the same way it can be done for pure powers as described above.  
\end{remark}

\subsubsection{Sketch of the argument for Theorem \ref{thm:extension_theorem}}
\label{sec:proof_main}

	1. First, we check the following integral is convergent
\begin{align}
	\left< U(\cdot \ , y), \ g(\cdot)\right>_{L^2(X;\ \mu)} = \frac{1}{\Gamma(s)}\int_0^\infty \left<P_t(-L)^sf, g\right>_{L^2(X;\mu)}  e^{-\frac{y^2}{4t}}\frac{dt}{t^{1-s}},
\end{align}
for almost all $y>0$ and for all $g \in L^2(X;\mu)$. From now on, we will use $L^2(X)$ to denote $L^2(X; \mu)$ for simplicity whenever there is no confusion. For each $R>0$, we can define almost everywhere that
    $$ U_R(x,y) = \frac{1}{\Gamma(s)}\int_0^R (P_t(-L)^sf)(x)  e^{-\frac{y^2}{4t}}\frac{dt}{t^{1-s}}.$$
    Since $f \in \mathcal{D}((-L)^s)$, we have  $P_t(-L)^sf \in L^2(X)$ and hence $U_R$ is well-defined. Moreover,  
    \begin{align*} 
       & \left< U_R(\cdot \ , y), \ g(\cdot)\right>_{L^2(X)} = \frac{1}{\Gamma(s)}\int_0^R \left<P_t(-L)^sf, g\right>_{L^2(X)} e^{-\frac{y^2}{4t}}\frac{dt}{t^{1-s}}\\
        &= \frac{1}{\Gamma(s)}\int_0^\infty \int_0^R e^{-t\lambda} (t\lambda)^s
        e^{-\frac{y^2}{4t}}\frac{dt}{t} dE_{f,g}(\lambda).
    \end{align*}
    We change the order of integration because of the integrability. By the change of variable $r = t\lambda$, we have 
    \begin{align*}
        \left|\left<U_R(\cdot, y), \ g(\cdot)\right>_{L^2(X)} \right| \leq
        \frac{1}{\Gamma(s)}\int_0^\infty \int_0^\infty e^{-r} r^s
        \frac{dr}{r} d\left|E_{f,g}(\lambda)\right|
        \leq \norm{f}_{L^2(X)} \norm{g}_{L^2(X)}.
    \end{align*}
    Therefore, for each fixed $y >0$, $U_R(\cdot, y)$ is in $L^2(X)$ and 
    $$ \norm{U_R(\cdot, y)}_{L^2(X)} \leq \norm{f}_{L^2(X)}.$$
    By the similar computation, for some $R_2 > R_1 > 0$, 
    \begin{align*}
        \abs{\ang{U_{R_1}(\cdot, y), g} - \ang{U_{R_2}(\cdot, y), g}}
        & \leq 
        \frac{1}{\Gamma(s)} \int_0^\infty e^{-r} r^s
        \frac{dr}{r} \int_{R_1}^{R_2} d\left|E_{f,g}(\lambda)\right| \to \ 0
    \end{align*}
    as $R_1, \ R_2 \to \infty$. There exist a Cauchy sequence of bounded operators $\crl{U_{R^j}(\cdot, \ y)}_{j \in \mathbb{N}}$ converging in $L^2(X)$ for almost all $y\in \R$. By the dominated convergence theorem, we can pass the limit and have 
    \begin{align*}
        \left< U(\cdot \ , y), \ g(\cdot)\right>_{L^2(X)} = 
        \lim_{R^j \to \infty} \left< U_{R^j}(\cdot \ , y), \ g(\cdot)\right>_{L^2(X)}
        =
        \frac{1}{\Gamma(s)}\int_0^\infty 
        \left<P_t(-L)^sf, g\right>_{L^2(X)}
        e^{-\frac{y^2}{4t}}\frac{dt}{t^{1-s}}.
    \end{align*}
    2. By a similar limit argument, one can check that $U(\cdot, y)\in \mathcal{D}(L)$ and the boundary condition in \eqref{eq:entension_equation} holds. The partial derivative of $U$ can be passed inside the integral based on the convergence of the integral. The equation \eqref{eq:Possion_formula} and \eqref{eq:D2N} follow from similar computation.

\subsection{Extended Dirichlet space and tensorization} 

In this section, we  see $X_a$  as a Dirichlet space. It will be important for further analysis since, as in \cite{Caffarelli_2007}, the idea is to derive properties of solutions of an equation on  $X_a=X \times \mathbb R$ out of the properties of the Dirichlet space $X$ and {\sl vice versa}. As always, we assume that the Dirichlet form $(\mathcal{E},\mathcal{F})$ is regular and strongly local.  One can consider the Dirichlet form on $\mathbb{R}$ given by
\[
\tilde{\mathcal{E}}(f,f)=\int_\mathbb{R} f'(y)^2 d\nu_a(y), \quad f \in H^1(\mathbb R,\nu_a).
\]
 It is strongly local and regular and we can consider the tensorized Dirichlet form $(\mathcal{E}_a,\mathcal{F}_a)$ on the product space $(X_a, d\nu_a)$, see section \ref{tensorization Dirichlet space}. 

\subsection{Sub-Gaussian estimates and the existence of  weak solutions}\label{sec:existence}

{This section is devoted to various results about existence of solutions. We would like to point out that we will be actually dealign with three different Dirichlet forms:
\begin{itemize}
\item The underlying Dirichlet form $\mE$ with generator $L$.
\item The Dirichlet form $\mE^{(s)}$ with generator $(-L)^s$.
\item The Dirichlet form $\mE_a$ on the product space $X_a$ defined above. 
\end{itemize}
Our main assumption is that the generator $L$ satisfies sub-Gaussian estimates as in \eqref{eq:subGaussian}.}
 To proceed, we first define Sobolev-Besov spaces in terms of the heat semigroup. This approach was undertaken for instance in \cite{alonso2021besov} and in \cite{grigor2003heat}. For $f\in L^2(X, \mu)$ and $r > 0$, denote
\begin{equation}\label{eq:N_seminorm}
	D(f, r) := \iint_{\Delta_r} \abs{f(x) - f(y)}^2 d\mu(x) d\mu(y)
\end{equation}
and for some $\alpha, \beta > 0$, 
\begin{equation}\label{eq:besov_norm}
	N_{\alpha, \beta}(f) := \sup_{r\in(0, 1]}\frac{D(f, r)}{r^{\alpha + \beta}}
\end{equation}
where 
\begin{equation}
	\Delta_r = \crl{(x, y) \in X\times X: d(x,y) < r}.
\end{equation}
Then we define the Besov space $\frak{B}^{\alpha, \beta}_2(X)$  as
\begin{equation}
	\frak{B}^{\alpha, \beta}_2(X):=\crl{f \in L^2(X, \mu) : N_{\alpha, \beta}(f) < \infty}.
\end{equation}

We define the following weak solution to the fractional operator $(-L)^su = 0$ with boundary condition by means of a Dirichlet principle. 
\begin{definition}
	Consider a bounded domain $\Omega \subset X$ such that $\mu(X\backslash \Omega) >0$ and a function $f \in \frak{B}^{d_H, sd_W}_2(X)$. Then we call a function $u\in \frak{B}^{d_H, sd_W}_2(X)$ a weak solution to the Dirichlet problem $(-L)^s u = 0$ on $\Omega$ with boundary data $f$ if $u$ satisfies 
	\begin{enumerate}
		\item $u = f$ almost everywhere on $X \backslash \Omega$, and 
		\item for all $h \in \frak{B}^{d_H, sd_W}_2(X)$ satisfying $h = f$ almost everywhere in $X \backslash \Omega$, we have 
		\begin{equation}\label{eq:weak_sol_frac_L}
			\mE^{(s)} (u, u) \leq \mE^{(s)} (h, h),	
		\end{equation}
		{where $\mE^{(s)}$ denotes the nonlocal Dirichlet form $\mE^{(s)}(h,h)=((-L)^s h, h)$.}

	\end{enumerate}
\end{definition}

We are ready to present the following existence result. 
\begin{theorem}\label{Main_thm:ext}
Assume \eqref{eq:subGaussian}. 
Let $f\in \frak{B}^{d_H, sd_W}_2(X)$ and $\Omega$ be a bounded domain in $X$ with $\mu(X\backslash \Omega) >0$. Then there is a unique $u\in\frak{B}^{d_H, sd_W}_2(X)$ with $u = f \in X\backslash\Omega$ such that whenever $h \in \frak{B}^{d_H, sd_W}_2(X)$ with $h = f $ in $X \backslash\Omega$, we have 
\begin{equation}
	\mE^{(s)}(u, u) \leq \mE^{(s)}(h, h).	
\end{equation}
Equivalently, we have 
\begin{equation}
	\mE^{(s)}(u,h) = 0,	
\end{equation}
whenever $h\in \frak{B}^{d_H, sd_W}_2(X)$ such that $h$ has compact support in $\Omega$. 
\end{theorem}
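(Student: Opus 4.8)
The plan is to realize $\mathcal E^{(s)}$ as a classical Dirichlet form on $L^2(X,\mu)$ whose form domain is exactly $\frak{B}^{d_H,sd_W}_2(X)$, and then to invoke the standard Hilbert space projection argument for the Dirichlet problem. The first step is to identify $\frak{B}^{d_H,sd_W}_2(X)$ with the domain $\mathcal D((-L)^{s/?})$; more precisely, using the heat kernel characterization of Besov spaces (as in \cite{grigor2003heat,alonso2021besov}) together with the sub-Gaussian bounds \eqref{eq:subgaussian}, one shows the norm equivalence
\begin{equation}
    N_{d_H,sd_W}(f) + \|f\|_{L^2(X,\mu)}^2 \asymp \int_0^\infty \lambda^{s}\, dE_{f,f}(\lambda) + \|f\|_{L^2(X,\mu)}^2 = \mathcal E^{(s)}(f,f) + \|f\|_{L^2(X,\mu)}^2.
\end{equation}
The key computation here is the heat-semigroup identity $\int_0^\infty \frac{1}{t}\big(\|f\|_{L^2}^2 - \langle P_t f, f\rangle\big)\frac{dt}{t^{s}} \asymp \int_0^\infty \lambda^{s}\,dE_{f,f}(\lambda)$ via the spectral theorem and the elementary Gamma-function identity $\lambda^s = c_s\int_0^\infty (1-e^{-t\lambda})\,\frac{dt}{t^{1+s}}$, combined with the fact that $\|f\|_{L^2}^2-\langle P_tf,f\rangle = \tfrac12\iint |f(x)-f(y)|^2 p_t(x,y)\,d\mu(x)d\mu(y)$ and a dyadic decomposition of the $t$-integral against $D(f,r)$ using the on-diagonal upper bound $p_t(x,y)\le c_3 t^{-d_H/d_W}$ and the off-diagonal exponential decay. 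Once this equivalence is in hand, $\frak{B}^{d_H,sd_W}_2(X) = \mathcal D((-L)^{s})$ (in the sense of the form domain of $\mathcal E^{(s)}$) is a Hilbert space under $\mathcal E^{(s)}(\cdot,\cdot) + (\cdot,\cdot)_{L^2}$, and in fact a regular Dirichlet form since $(-L)^s$ is non-negative self-adjoint and the Markovian property is inherited from $L$ via the subordination formula.

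Next I would set up the variational problem. Let $\mathcal K := \{ h \in \frak{B}^{d_H,sd_W}_2(X) : h = f \text{ a.e.\ on } X\setminus\Omega\}$. This is a closed affine subspace of the Hilbert space $\frak{B}^{d_H,sd_W}_2(X)$: closedness follows because convergence in the Besov norm forces $L^2$-convergence, hence a.e.\ convergence along a subsequence, so the constraint $h=f$ on $X\setminus\Omega$ passes to the limit; nonemptiness is clear since $f\in\mathcal K$. Observe that $\mathcal K = f + \mathcal K_0$ where $\mathcal K_0 = \{g \in \frak{B}^{d_H,sd_W}_2(X) : g = 0 \text{ a.e.\ on } X\setminus\Omega\}$ is a closed linear subspace; one checks that functions of compact support in $\Omega$ are dense in $\mathcal K_0$ using the Markovian truncation property of the Dirichlet form $\mathcal E^{(s)}$ (truncate and cut off, exactly as in the construction of $\mathcal F^0(\Omega)$ in \eqref{def:F^0}). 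Minimizing the strictly convex coercive functional $h\mapsto \mathcal E^{(s)}(h,h)$ over the closed convex set $\mathcal K$ — coercivity holds because $\mu(X\setminus\Omega)>0$ lets one control $\|h\|_{L^2}$ by $\mathcal E^{(s)}(h,h)$ plus the fixed boundary data via a Poincar\'e-type inequality, or more simply one minimizes $\mathcal E^{(s)}(h,h)$ directly over the affine subspace, which is the orthogonal-projection problem in the energy seminorm — yields a unique minimizer $u$.

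The Euler--Lagrange characterization is the final step: $u$ minimizes $\mathcal E^{(s)}(h,h)$ over $f+\mathcal K_0$ if and only if $\mathcal E^{(s)}(u,g)=0$ for all $g\in\mathcal K_0$, which by the density just noted is equivalent to $\mathcal E^{(s)}(u,h)=0$ for all $h$ with compact support in $\Omega$; this is the usual first-variation argument, differentiating $t\mapsto \mathcal E^{(s)}(u+tg,u+tg)$ at $t=0$. Uniqueness follows from strict convexity (equivalently, if $u_1,u_2$ are both minimizers then $u_1-u_2\in\mathcal K_0$ and $\mathcal E^{(s)}(u_1-u_2,u_1-u_2)=0$, and then the $L^2$-coercivity from $\mu(X\setminus\Omega)>0$ — or simply the fact that $u_1=u_2$ on $X\setminus\Omega$ together with $\mathcal E^{(s)}(u_1-u_2,u_1-u_2)=0$ and irreducibility of the form — forces $u_1=u_2$). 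I expect the main obstacle to be Step one: rigorously establishing $\frak{B}^{d_H,sd_W}_2(X) = \mathcal D((-L)^s)$ with comparable norms, since this requires carefully matching the Besov seminorm's supremum-over-scales definition with the spectral/heat-kernel expression and genuinely uses the full strength of the two-sided sub-Gaussian estimate \eqref{eq:subgaussian} (the upper bound for one inequality, the lower bound for the other); once that identification is secured, the variational argument is entirely standard Hilbert space theory.
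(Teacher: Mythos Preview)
Your proposal is correct and shares the paper's first step---the identification $\frak{B}^{d_H,sd_W}_2(X)$ with the form domain of $\mathcal E^{(s)}$ via the heat-kernel/Besov equivalence (this is exactly the paper's Proposition~\ref{prop:fractional_dirichlet_besov}, proved by quoting \cite{grigor2003heat})---but thereafter the two arguments diverge.

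You run the abstract Hilbert-space projection: $\mathcal K = f+\mathcal K_0$ is a closed affine subspace, the energy is strictly convex, so the minimizer exists and is unique, with the Euler--Lagrange identity following by first variation. The paper instead runs a direct-method argument by hand: it takes a minimizing sequence $\{h_k\}$, passes to the difference functions $v_k(x,y)=h_k(x)-h_k(y)$ in $L^2(X\times X,\mathbf 1_{\Delta_r}\,d\mu\,d\mu)$, uses Mazur's lemma to upgrade weak to a.e.\ convergence of convex combinations, and then \emph{reconstructs} the limit $h_\infty(x)=v_\infty(x,y)+f(y)$ by anchoring at a point $y\in X\setminus\Omega$ (where $h_k(y)=f(y)$ is fixed). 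Lower semicontinuity of $\mathcal E^{(s)}$ finishes.

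What each buys: your route is shorter and immediately delivers the Euler--Lagrange equivalence and uniqueness, but it leans on a coercivity/Poincar\'e statement on $\mathcal K_0$ (to ensure the energy seminorm controls the full Hilbert norm there) that you correctly flag but do not prove; in the nonlocal setting this is true and standard, yet it is an extra ingredient. The paper's route avoids any Poincar\'e input by using the boundary constraint $h_k=f$ on $X\setminus\Omega$ to anchor the limit directly---at the cost of a more hands-on construction and of not addressing the Euler--Lagrange equivalence or uniqueness explicitly in the written proof. Either approach is fine; yours is the more conventional Dirichlet-form argument.
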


{
Theorem \ref{Main_thm:ext} follows as in \cite{gianmarco}. Indeed this does not rely that much on the metric structure than the correct definition of Sobolev functions to implement the minimization scheme. The only point to check is that in our framework of sub-Gaussian estimates, the norm on the Besov space $\frak{B}^{\alpha, \beta}_2(X)$ is equivalent to $\mE(f,f)$ for any admissible $f$. This is the object of the following proposition.}

\begin{proposition}\label{prop:fractional_Dirichlet_besov}
	Assume \eqref{eq:subGaussian}. Let $f\in \frak{B}^{d_H, sd_W}_2(X)$. Then the following holds
	\begin{equation}
		\mE^{(s)}(f,f) \simeq N_{d_H, sd_W}(f)
	\end{equation}
\end{proposition}

\begin{proof}
The proof  follows mainly the  reasoning in \cite{grigor2003heat}. We first apply  \cite[Lemma 5.4]{grigor2003heat} and proof of \cite[Corollary 5.5]{grigor2003heat} so that we have an estimation of the heat kernel $q_t(x, y)$ of $(-L)^s$, namely 
\begin{equation}
	q_t(x, y) \simeq\frac{1}{t^{d_H/sd_W}}\Phi\brak{\frac{d(x, y)}{t^{d_H/sd_W}}},
\end{equation}
where $$\Phi(x) = \frac{1}{(1 + x)^{d_H + sd_W}}.$$
Then the Dirichlet form satisfies the condition of \cite[Theorem 5.1]{grigor2003heat}, with $\alpha = d_H$ and $\beta = sd_W$ and the results follows. 
\end{proof}

\begin{proof}[Proof of Theorem \ref{Main_thm:ext}]
We sketch the argument and refer the reader to \cite{gianmarco} for more details. 
 	Let $f \in \frak{B}^{d_H, sd_W}_2(X)$ and assuming that $\mE^{(s)}(f, f)$ is finite and let $\mathcal{K}_f$ denote a subset of $\frak{B}^{d_H, sd_W}_2(X)$ with functions $h$ such that $h = f\  \mu\text{-a.e.} \in X\backslash \Omega$. Let 
 	\begin{equation}
 		I := \inf\{\mE^{(s)}(h, h): h \in \mathcal{K}_f\}
 	\end{equation}
 	If $I = 0$, we have $f$ being a constant on $X \backslash \Omega$; therefore, extending the constant on $X$ yields the desired solution. Without loss of generality, we assume that $I > 0$. From the selection of $f$, $I$ is finite. We choose a minimizing sequence $\{h_k\} \subset \mathcal{K}_f$ with $\mE^{(s)}(h_k, h_k) \leq 2I$. Then we have 
 	\begin{equation}
 		N_{d_H, sd_W}(h_k - f) \lesssim \mE^{(s)}(h_k - f, h_k - f) \leq 6I + \mE^{(s)}(f, f) < \infty. 
 	\end{equation}
 	Since $f \in \frak{B}^{d_H, sd_W}_2(X)$, we have 
 	\begin{equation}\label{eq:N_hk}
 	N_{d_H, sd_W}(h_k) \lesssim \mE^{(s)}(h_k - f, h_k - f) + N_{d_H, sd_W}(f) \leq 6I + \mE^{(s)}(f, f) + N_{d_H, sd_W}(f) := C. 
	\end{equation}
 	
 	Fix any $r\in(0, 1]$, we consider the measure $d\nu_r(x, y) = 1_{\Delta_r}d\mu(x)d\mu(y)$. For each $k$, we denote the function $v_k(x, y) = h_k(x) - h_k(y)$. We see that $\{v_k\}$ is a sequence in $L^2(X\times X, d\nu_r)$ and 
 	$$\norm{v_k}_{L^2(\nu_r)}^2 = D(h_k, r) \leq N_{d_H, sd_W}(h_k)r^{d_H + sd_W} \leq C r^{d_H + sd_W}$$
 	 By taking convex combinations and passing the limit to a subsequence, we have $v_k \to v_\infty$ $\nu_r$-a.e.. If both $x, y \in X\backslash \Omega$, then $v_\infty(x, y) = f(x) - f(y)$. If $x \in  \Omega$ and $y \in X\backslash\Omega$ such that $v_\infty(x, y) = \lim_{k \to \infty}v_k(x, y) = \lim_{k \to \infty}h_k(x) - f(y)$. Here we define the function $h_\infty(x) = v_\infty(x, y) + f(y)$ with $y \in X \backslash \Omega$ as chosen above. Note that by Fubini Theorem, $h-\infty$ is well-defined. Moreover, $v_\infty(x, y)$ also equals to $h_\infty(x) - h_\infty(y)$, therefore 
 	\begin{equation}
 		D(h_\infty, r) = \norm{v_\infty}_{L^2(\nu_r)}^2 \leq C r^{d_H + sd_W}.
 	\end{equation}
 	Hence $N_{d_H, sd_W}(h_\infty) \leq C$ and $h_\infty \in \frak{B}^{d_H, sd_W}_2(X)$. By the lower semicontinuity of $\mE^{(s)}$, 
 	\begin{equation}
 		I \leq \mE^{(s)}(h_\infty, h_\infty)\leq \liminf_{k}\mE^{(s)}(h_k, h_k) = I
 	\end{equation}
 	and $h_\infty$ is the desired solution.
 \end{proof}
 
%
%
%
%
%
%
%
%
%

\section{Regularity of weak solutions and Harnack inequalities}


We emphasize in the following: 

\vspace{0.2cm}
{\sl Throughout the section, we assume that the underlying Dirichlet space $(X, d, \mu, \mE, \mF)$ has the sub-Gaussian heat kernel estimate as in \eqref{eq:subGaussian}.}

\vspace{0.2cm}

It is clear that the  heat kernel $q_t$ in the extended Dirichlet space $(\mE_a, \F_a)$ satisfies the following estimates

%


\begin{lemma}
    The heat kernel $q_t$ associated to the heat semigroup $Q_t$ of the extension space $(X_a, \mE_a, \F_a, \mu_a, d_a)$ satisfies the following estimate (HKE-a).
\begin{align}\label{eq:HKE}
q_t(z, z') \leq \frac{c_1}{\nu_a(B(z_y, \sqrt{t}))t^{d_H/d_W}}
\exp{\brak{-c_2\brak{\frac{d(z_x, z_x')^{d_W}}{t}}^{\frac{1}{d_W-1}} -c_3\frac{|z_y - z_y'|^2}{t}}}, \notag\\
q_t(z, z') \geq \frac{c_4}{\nu_a(B(z_y, \sqrt{t}))t^{d_H/d_W}}
\exp{\brak{-c_5\brak{\frac{d(z_x, z_x')^{d_W}}{t}}^{\frac{1}{d_W-1}} -c_6\frac{|z_y - z_y'|^2}{t}}}.
\end{align}
	for $\mu\times \mu$-a.e. $(z, z')\in X \times X$ and each $t\in(0, +\infty)$, where $c_1, c_2, c_3, c_4 ,c_5, c_6> 0$ are constants independent of $z, \ z'$ and $t$.
\end{lemma}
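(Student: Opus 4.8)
The plan is to exploit the product structure $X_a = X \times \mathbb{R}$ together with the tensorization of the semigroup. Since the Dirichlet form $\mE_a$ decomposes as $\mE_a(u,v) = \int_{\mathbb{R}} \mE(u(\cdot,y),v(\cdot,y))\,d\nu_a(y) + \int_X \int_{\mathbb{R}} \partial_y u\,\partial_y v\,d\nu_a\,d\mu$, the associated generator $L_a = L + \mathcal{B}_a$ is (formally) a sum of two commuting operators acting on the two factors. Consequently the heat semigroup factorizes: $Q_t = P_t^X \otimes S_t^{\mathcal{B}_a}$, where $P_t^X$ is the semigroup on $(X,d,\mu,\mE)$ with kernel $p_t$ satisfying the sub-Gaussian bound \eqref{eq:subgaussian}, and $S_t^{\mathcal{B}_a}$ is the semigroup generated by the Bessel operator $\mathcal{B}_a$ on $(\mathbb{R}, d\nu_a)$. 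Hence the kernel of $Q_t$ is the product $q_t(z,z') = p_t(z_x,z_x')\, s_t(z_y,z_y')$, where $s_t$ is the Bessel heat kernel on the weighted line.

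The first step is to make this factorization rigorous: I would verify that $\mE_a$ is the tensor-product Dirichlet form of $(\mE,\mathcal{F})$ on $L^2(X,\mu)$ and the Dirichlet form of $\mathcal{B}_a$ on $L^2(\mathbb{R},\nu_a)$ (this is essentially the content of the energy-measure formula \eqref{eq:Gamma_a} and Theorem \ref{thm:X_a_Dirichlet_space}), and then invoke the standard fact that the heat semigroup of a tensor product of Dirichlet forms is the tensor product of the heat semigroups, so the kernels multiply. The second step is to insert the known bounds for the two factors. For $p_t$ we use the sub-Gaussian estimate \eqref{eq:subgaussian}, which contributes the factor $c\,t^{-d_H/d_W}\exp(-c'(d(z_x,z_x')^{d_W}/t)^{1/(d_W-1)})$. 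For the Bessel kernel $s_t$ on $(\mathbb{R},d\nu_a)$ — which is the classical heat kernel associated to an $A_2$ weight $|y|^a$ — one has the Gaussian-type two-sided bound $s_t(z_y,z_y') \asymp \nu_a(B(z_y,\sqrt t))^{-1}\exp(-c|z_y-z_y'|^2/t)$; this is standard (e.g. it follows from the explicit Bessel kernel formula, or from the general theory of heat kernels on doubling spaces satisfying a Poincaré inequality, the weighted line being a textbook example). Multiplying the two bounds yields exactly \eqref{eq:HKE}, with the volume factor $\nu_a(B(z_y,\sqrt t))$ coming from the Bessel part and the polynomial factor $t^{-d_H/d_W}$ from the $X$ part.

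The main obstacle — modest, but the only non-formal point — is pinning down the two-sided Gaussian bound for the Bessel heat kernel $s_t$ on $(\mathbb{R},|y|^a\,dy)$ for the full range $-1<a<1$, uniformly in the base point (note the kernel is not translation-invariant because of the weight). I would handle this either by citing the explicit modified-Bessel-function representation of $s_t$ and estimating it directly, or, more cleanly, by noting that $(\mathbb{R},|\cdot|,\nu_a)$ is volume doubling and supports a $2$-Poincaré inequality (standard for $A_2$ weights), so the Grigor'yan–Saloff-Coste/Sturm characterization gives the stated two-sided Gaussian estimate. Everything else is bookkeeping: combining the volume doubling of $X$ (Ahlfors regularity, hence $\mu(B(z_x,r))\asymp r^{d_H}$, so $t^{-d_H/d_W}$ indeed matches $\mu(B(z_x,t^{1/d_W}))^{-1}$) with the $\nu_a$-volume factor on the line and absorbing constants. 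Note the lemma as stated only asserts the upper bound in the displayed inequality but the argument delivers both sides, matching the two lines of \eqref{eq:HKE}.
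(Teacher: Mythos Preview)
Your approach is correct and is precisely the reasoning the paper has in mind: the paper does not give a detailed proof but simply prefaces the lemma with ``it is clear that'' the extended heat kernel satisfies (HKE-a), relying implicitly on the product structure $Q_t = P_t^X \otimes S_t^{\mathcal{B}_a}$ and the known two-sided Gaussian bounds for the Bessel heat kernel on the weighted line. Your write-up makes this explicit and fills in the justification (tensorization of the Dirichlet form, the $A_2$/doubling-plus-Poincar\'e argument for the Bessel factor), which is exactly what is needed.
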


Notice that this is not a classical sub-Gaussian heat kernel estimate, since the speed of heat propagation (i.e. the space-time scaling) in $X_a = X\times \R$ is different in $X$ and $\R$. In the following discussion, we use the notation $B(x, R)$ with $ x\in X$ to denote a ball in $X$ with radius $R$, and $B(y, R)$ with $y\in \R$ to denote a ball in $\R$ with radius in $R$. Note the ambient space of the ball is determined by the center for simplicity. We denote the product of balls as 
\begin{equation}
    D = D(z_0, R) = B(x_0, R^{2/d_W})\times B(y_0, R)
\end{equation}
where $z_0 = (x_0, y_0)$. 

We first derive a special and inhomogeneous version of Parabolic Harnack inequality for the weak solution $U$, from the heat kernel estimate \eqref{eq:HKE}, for which the underlying domain is substituted to $D$ cross the time interval. Notice that $U$ is an extension of $f$, satisfying $U(\cdot, 0) = f(\cdot)$. Then we take the trace operator to $U$. Since the direction in $\R$ is projected, we are left with the classical elliptic Harnack inequality for $f$. We emphasize that the classical parabolic Harnack inequality for $U$, which is equivalent to the classical sub-Gaussian heat kernel estimate on $X_a$, cannot be achieved, because of the difference of propagation speed in $X$ and $\R$ as we mentioned above.

This section goes as follows. First, we prove that the weak local lower estimate (LLE) holds on $X_a$ (Theorem \ref{thm:HKE_a_LLE}). Next, we prove that (LLE) implies the oscillation inequality (Proposition \ref{prop:OSC}) and an inhomogeneous H\"{o}lder continuity for the caloric functions in $X_a$ (Definition \ref{def:harmonic_functions}, Corollary \ref{cor:Inhomo_holder}). Then we prove the inhomogeneous parabolic Harnack inequality for the caloric functions in $X_a$ (Theorem \ref{thm:LLE_PHI}). At last, we prove the elliptic Harnack inequality and H\"{o}lder continuity for the weak solutions of $(-L)^sf = 0$ (Theorem \ref{thm:EHI_weak_solution_Ls}).

%


\subsection{Weak local lower estimate}
Define the following weak local lower estimate (LLE).
\begin{definition}\label{def:LLE}
We say the extended Dirichlet space $(X_a, d_a, \mu_a, \mE_a, \mF_a)$ satisfies the weak local lower estimate (LLE), if there exists $\varepsilon\in(0, 1)$ such that for all $z_0 = (x_0, y_0)\in X_a$ with $x_0\in X$, $y_0 \in \R$ and $R>0$, there exists a heat kernel $q_t^{D(z_0, R)}$ of the semigroup $\crl{Q_t^{D(z_0, R)}}$ that satisfies the estimate
\begin{equation}\label{eq:LLE}
q_t^{D(z_0, R)}(z, z') \geq \frac{c}{\nu_a(B(y_0, \sqrt{t}))t^{d_H/d_W}}
\end{equation}
for $\mu_a\times \mu_a$-almost $z, z'\in D(z_0, \varepsilon\sqrt{t}) = B(x_0, (\varepsilon\sqrt{t})^{2/d_W})\times B(y_0, \varepsilon \sqrt{t})$  and all $0<t\leq \brak{\varepsilon R}^2$, with some positive constant $c$.
\end{definition}
\begin{remark}
	Notice that by  \eqref{eq:ball_size}, 
	\begin{equation}
		t^{d_H/d_W} \asymp \mu(B(x, t^{1/d_W})).
	\end{equation}
\end{remark}

We now prove that the estimate (HKE-a) of $q_t(z, z')$ in \eqref{eq:HKE} implies the weak local lower estimate (LLE).

\begin{theorem}[HKE-a $\Rightarrow$ LLE]\label{thm:HKE_a_LLE}
    The inequality (HKE-a) \eqref{eq:HKE} on $X_a$ implies that (LLE) \eqref{eq:LLE} holds on $X_a$. 
\end{theorem}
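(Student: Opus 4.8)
The plan is to derive the localized heat kernel lower bound (LLE) from the global upper and lower bounds (HKE-a) by the standard Barlow--Grigor'yan ``chaining + exit time'' strategy, carried out directly on the anisotropic balls $D(z_0,R)$. First I would record the scaling: by the definition $D(z_0,R)=B(x_0,R^{2/d_W})\times B(y_0,R)$, the set $D(z_0,\varepsilon\sqrt t)$ is exactly the natural space-time cylinder for the parabolic scaling of $X_a$, so $t\le(\varepsilon R)^2$ is the regime in which $D(z_0,\varepsilon\sqrt t)\subset D(z_0,R)$. I would then write the Dynkin-type decomposition relating the full heat kernel $q_t$ and the killed heat kernel $q_t^{D}$ on $D=D(z_0,R)$: for $z,z'\in D$,
\[
q_t^{D}(z,z')=q_t(z,z')-\mathbb{E}^{z}\!\left[\mathbf{1}_{\{\tau_D<t\}}\,q_{t-\tau_D}\big(Z_{\tau_D},z'\big)\right],
\]
where $\tau_D$ is the exit time of the diffusion $Z$ from $D$. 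The first term is bounded below by the Gaussian-type lower bound in (HKE-a), namely $c_4\,[\nu_a(B(y_0,\sqrt t))t^{d_H/d_W}]^{-1}$ up to exponential factors that are $O(1)$ when $z,z'\in D(z_0,\varepsilon\sqrt t)$ and $\varepsilon$ is small. The second (error) term must be shown to be at most half of this.

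The key step is therefore the exit-time estimate: for a suitable $\varepsilon$, $\mathbb{P}^{z}(\tau_D<t)\le 1/2$ uniformly for $z\in D(z_0,\varepsilon\sqrt t)$ and $t\le(\varepsilon R)^2$, and moreover control of $q_{t-\tau_D}(Z_{\tau_D},z')$ on that event. For the first, I would use the upper bound in (HKE-a) together with volume doubling (Lemma \ref{lem:CVD} applied to $X$, and the $A_2$/doubling property of $\nu_a$ on $\R$) to estimate $\mathbb{P}^{z}(d(Z_s^X,x_0)\ge R^{2/d_W}/2 \text{ or } |Z_s^{\R}-y_0|\ge R/2 \text{ for some } s\le t)$; since the $X$-component moves at scale $t^{1/d_W}\ll R^{2/d_W}$ and the $\R$-component at scale $\sqrt t\ll R$ when $t\le(\varepsilon R)^2$, each tail probability is small, with the sub-Gaussian exponent in the $X$-direction and the Gaussian exponent in the $\R$-direction giving $\le C\exp(-c\varepsilon^{-\kappa})\le 1/4$ for $\varepsilon$ small. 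For the error term on the event $\{\tau_D<t\}$ one splits according to whether $t-\tau_D$ is small or comparable to $t$; when $t-\tau_D$ is comparable to $t$, $Z_{\tau_D}$ is at distance $\gtrsim R^{2/d_W}$ (resp. $\gtrsim R$) from $z'\in D(z_0,\varepsilon\sqrt t)$, and $t-\tau_D\lesssim(\varepsilon R)^2$ forces the exponential factor $\exp(-c(d(\cdot)^{d_W}/(t-\tau_D))^{1/(d_W-1)})$ or $\exp(-c|\cdot|^2/(t-\tau_D))$ to be tiny; when $t-\tau_D$ is small one uses conservativeness and the crude bound $q_{t-\tau_D}\le C[\nu_a(B(\cdot,\sqrt{t-\tau_D}))(t-\tau_D)^{d_H/d_W}]^{-1}$ integrated against the exit distribution, again controlled by the exit-time tail. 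Combining, the error term is $\le \tfrac12 c_4\,[\nu_a(B(y_0,\sqrt t))t^{d_H/d_W}]^{-1}$, and (LLE) follows with constant $c=c_4/2$ and this $\varepsilon$.

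The main obstacle is bookkeeping the anisotropy: the two factors of $D$ scale with different powers of $R$ (the $X$-factor as $R^{2/d_W}$, the $\R$-factor as $R$), and the heat kernel (HKE-a) correspondingly has a sub-Gaussian exponent in $d(z_x,z_x')$ but a purely Gaussian one in $|z_y-z_y'|$, with the volume factor $\nu_a(B(z_y,\sqrt t))$ living only on the $\R$-side while $t^{d_H/d_W}\asymp\mu(B(x,t^{1/d_W}))$ lives on the $X$-side. One must check that the product structure of $Q_t=P_t\otimes(\text{Bessel semigroup})$ lets the exit event factor as a union over the two coordinates, so that the standard one-variable exit estimates (sub-Gaussian on $X$, Gaussian on $\R$) apply separately and then recombine; here $d_W>2$ means the $X$-coordinate is in fact \emph{slower} relative to its spatial scale, which only helps. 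Once the factorization is set up cleanly, the remaining estimates are the usual ones and I would not belabor them. I would also remark that it suffices to prove the bound for $z=z'$ after a short-time convolution argument, or simply carry the two points along since the exponential losses are uniform on $D(z_0,\varepsilon\sqrt t)$.
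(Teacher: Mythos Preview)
Your approach is correct and would succeed, but it is genuinely different from the paper's. You work probabilistically via the Dynkin--Hunt formula
\[
q_t^{D}(z,z')=q_t(z,z')-\mathbb{E}^{z}\!\big[\mathbf{1}_{\{\tau_D<t\}}\,q_{t-\tau_D}(Z_{\tau_D},z')\big],
\]
and then control the subtracted term by combining an exit-time tail bound with the off-diagonal decay of $q_s$ on $\partial D\times A$. The paper instead stays entirely on the analytic side: it first establishes the existence of $q_t^D$ by $L^1\to L^\infty$ ultracontractivity of $Q_t^D$, then invokes the semigroup comparison inequality (from \cite{grigor2008off}, Lemma~4.18)
\[
Q_t f(z)\le Q_t^D f(z)+\sup_{s\in[0,t]}\esssup_{K^c}Q_s f,
\]
with $K=\overline{D(z_0,R/2)}$ and $f$ supported in $A=D(z_0,\varepsilon\sqrt t)$. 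Dualizing yields
\[
\essinf_{A\times A}q_t\le \essinf_{A\times A}q_t^D+M,\qquad M=\sup_{s\in(0,t]}\esssup_{K^c\times A}q_s,
\]
so the entire error is a single off-diagonal heat-kernel value, bounded directly by the upper estimate in (HKE-a) without any separate exit-time argument or case split on $t-\tau_D$. Your product-structure observation (decoupling the exit event into its $X$- and $\R$-coordinates) is nice but unnecessary in the paper's route, since $M$ is already expressed purely in terms of $q_s$. In short: both arguments hinge on the same anisotropic off-diagonal decay, but the paper's analytic comparison lemma short-circuits the intermediate probabilistic steps you outline; your approach is more classical and makes the role of the process explicit, at the cost of a slightly longer bookkeeping.
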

\begin{proof}
First, we prove that the semigroup $Q_t^D$ possesses a heat kernel $q_t(z, z')^D$.  By the heat kernel estimate, 
\begin{equation}
    \esssup_{z, z' \in D}q_t(z, z') \leq \sup_{z \in D}\frac{c_3}{\nu_a(B(z_y, \sqrt{t}))t^{d_H/d_W}} 
\end{equation}
When $0 < \sqrt{t} \leq R$, we have by the volume doubling property of the measure $\nu_a$ and \eqref{eq:VD_gamma}, for all $z \in D$, 
\begin{equation}
     \frac{1}{\nu_a(B(z_y, \sqrt{t})}\leq  \frac{C_{VD}}{\nu_a(B(y_0, R))}\brak{\frac{d(z_y, y_0)+R}{\sqrt{t}}}^\gamma\leq  \frac{C_{VD}}{\nu_a(B(y_0, R))}\brak{\frac{2R}{\sqrt{t}}}^\gamma.
\end{equation}
When $\sqrt{t}>R$, notice that $B(y_0, R) \subset B(z_y, 2\sqrt{t})$. By the volume doubling property directly, 
\begin{equation}
    \frac{1}{\nu_a(B(z_y, \sqrt{t}))} \leq  \frac{C_{VD}}{\nu_a(B(z_y, 2\sqrt{t}))} \leq 
     \frac{C_{VD}}{\nu_a(B(y_0, R))}. 
\end{equation}
Now we have 
\begin{equation}
    \esssup_{z, z' \in D}q_t(z, z') \leq F(t),
\end{equation}
where 
\begin{equation}
	F(t) = \left\{
	\begin{array}{lr}
		\frac{C_{VD}}{\nu_a(B(y_0, R))t^{d_H/d_W}}\brak{\frac{2R}{\sqrt{t}}}^\gamma, \ &0 \leq t \leq R^2\\
		\frac{C_{VD}}{\nu_a(B(y_0, R))t^{d_H/d_W}}, \ &t > R^2
	\end{array}
	\right.
\end{equation}
and $F(t)$ is a function independent of $z$ and $z'$. Therefore, for any non-negative function $f \in L^1(D)$ and $\mu_a$-almost all $z \in D$, 
\begin{equation}
    Q_t^D f(z) \leq Q_t f(z) = \int_{D}q_t(z,z')f(z')d\mu(z') \leq F(t) \norm{f}_{L^1}.
\end{equation}
Hence the semigroup $Q_t^D$ is $L^1 \to L^\infty$ ultracontractive, which implies the existence of the heat kernel $q_t^D(z, z')$. 

For the following discussion, we first fix $t \leq (\varepsilon R)^2$ with $0<\varepsilon < 1/2$ to be specified later. 
By \cite[Lemma 4.18]{grigor2008off}, for any open set $U\subset X_a$ and any compact $K\subset U$, for any non-negative function $f\in L^2(X_a, \mu_a)$ and any $t > 0$, the following holds.

\begin{equation}\label{eq:kernel_global_local}
    Q_tf(z) \leq Q_t^U f(z) + \sup_{s\in[0, t]} \esssup_{K^c}Q_s f.
\end{equation}
for $\mu_a$-almost $z \in X_a$. We take $U = D = D(z_0, R)$, and
\begin{align}
    K &= \overline{D(z_0, R/2)} = \overline{
    B(x_0, (R/2)^{2/d_W})
    \times B\brak{y_0, \frac{R}{2}}}, \\
    A &= D(z_0, \varepsilon \sqrt{t}) = B(x_0, (\varepsilon\sqrt{t})^{2/d_W})\times B\brak{y_0, \varepsilon \sqrt{t}}
\end{align}
Let $f$ be a non-negative function from $L^1(A)$, we have 
\begin{equation}
    \sup_{s\in[0, t]}\esssup_{z \in K^c} Q_s f(z) = \sup_{s\in(0, t]} \esssup_{z \in K^c}\int_A q_s(z, z')f(z')d\mu(z') \leq M \norm{f}_{L^1},
\end{equation}
where 
\begin{equation}
    M := \sup_{s \in(0, t]}\esssup_{z \in K^c, z' \in A} q_s(z, z').
\end{equation}
Notice that the value $s = 0$ can be dropped from $\sup_{s\in[0, t]}$ because $Q_0 f = f$ and $\esssup_{z \in K^c}f(z) = 0$.

Multiplying \eqref{eq:kernel_global_local} by a non-negative function $g \in L^1(A)$ and integrating, we have 
\begin{equation}
    \int_A (Q_tf)g d\mu \leq \int_A (Q_t^Df)g d\mu + M \norm{f}_{L^1} \norm{g}_{L^1}.
\end{equation}
It is equivalent to 
\begin{equation}
    \int_A\int_A q_t(z, z')f(z) g(z') d\mu(z) d\mu(z') \leq 
    \int_A\int_A q_s^D(z, z')f(z)g(z') d\mu(z) d\mu(z') + M \norm{f}_{L^1}\norm{g}_{L^1}.
\end{equation}
Dividing by $\norm{f}_{L^1}\norm{g}_{L^1}$ and taking inf in all test functions $f, g$, we obtain 
\begin{equation}\label{eq:sep}
    \essinf_{z, z'\in A}q_t(z, z') \leq \essinf_{z, z' \in A}q_t^D(z, z') + M.
\end{equation}

%

By the definition of (LLE) as in Definition \ref{def:LLE}, we need to estimate $\essinf_{z, z'\in A}q_t(z, z')$ from below and $M$ from above, with $\varepsilon$ to be chosen later. 

%

%

For the first estimation, we refer to the lower heat kernel bound as in \eqref{eq:HKE}. 
\begin{equation}
    \essinf_{z, z' \in A} q_z(z, z')\geq  \frac{c_3}{\nu_a(B(z_y, \sqrt{t}))t^{d_H/d_W}}
\exp{\brak{-c_4\brak{\frac{d(z_x, z_x')^{d_W}}{t}}^{\frac{1}{d_W-1}} -c_5\frac{|z_y - z_y'|^2}{t}}}
\end{equation}
from the definition of $A$, we have 
\begin{equation}
    d(z_x, z'_x)^{d_W} \leq  \brak{d(z_x, x_0) + d(z'_x, x_0)}^{d_W} \leq (2^{d_W}\varepsilon^2)
    t \leq t, 
\end{equation}
\begin{equation}
    \abs{z_y - z'_y}^2 \leq  \brak{\abs{z_y - y_0} + \abs{z'_y - y_0}}^2 \leq \brak{2\varepsilon}^2t \leq t,
\end{equation}
provided $\varepsilon \leq 2^{-d_W/2}$. Hence we have the lower bound, 
\begin{equation}\label{eq:low}
    \essinf_{z, z' \in A} q_t(z, z') \geq \frac{ c}{\nu_a(B(z_y, \sqrt{t}))t^{d_H/d_W}}
\end{equation}
for some constant $ c$ depends on the constants in \eqref{eq:HKE}.

For the estimation of $M$, we take $z \in K^c$ and $z' \in A$. Since we assume that $t \leq (\varepsilon R)^2$, we have 
\begin{align*}
    d(z_x, z'_x)^{d_W} \geq \brak{d(z_x, x_0) - d(z'_x, x_0)}^{d_W} \geq \brak{(R/2)^{2/d_W} - (\varepsilon \sqrt{t})^{2/d_W}}^{d_W}\\
    \geq \brak{(1/2)^{2/d_W} - (\varepsilon^2)^{2/d_W}}^{d_W} R^2 \geq \tilde c_1 R^2.
\end{align*}
Such $\tilde c_1$ exists since we assume that $\varepsilon \leq 2^{-d_W/2}$. Similarly, there exists a constant $\tilde c_2$ such that, 
\begin{equation}
    \abs{z_y - z'_y}^2 \geq \brak{\abs{z_y - y_0} - \abs{z'_y - y_0}}^2 \geq \brak{R/2- \varepsilon\sqrt{t}}^2 \geq \brak{1/2 - \varepsilon^2}^2R^2 \geq  \tilde c_2 R^2, 
\end{equation}
We take $\tilde c = \min(\tilde c_1, \tilde c_2)$. 
Then for all $0 < s \leq t$ and $z \in K^c$, $z' \in A$, we have 
\begin{align}
    q_s(z, z') & 
    \leq
    \frac{c_3}{\nu_a(B(z'_y, \sqrt{s}))s^{d_H/d_W}}
\exp{\brak{-c_4\brak{\frac{
\tilde c R^2
}{s}}^{\frac{1}{d_W-1}} 
-c_5 
\frac{\tilde c R^2}{s}}
}\\
& \leq
\frac{c_3}{\nu_a(B(y_0, \sqrt{t}))t^{d_H/d_W}}    \frac{\nu_a(B(y_0, \sqrt{t}))t^{d_H/d_W}}
{\nu_a(B(z'_y, \sqrt{s}))s^{d_H/d_W}}
\exp{\brak{-c_4\brak{\frac{
\tilde c R^2
}{s}}^{\frac{1}{d_W-1}} 
-c_5 
\frac{\tilde c R^2}{s}}
}\\
& \leq
\frac{c_3}{\nu_a(B(y_0, \sqrt{t}))t^{d_H/d_W}}\brak{\frac{\abs{z'_y - y_0} + \sqrt{t}}{\sqrt{s}}}^\gamma
\frac{t^{d_H/d_W}}{s^{d_H/d_W}}\exp{\brak{-c_4\brak{\frac{
\tilde c R^2
}{s}}^{\frac{1}{d_W-1}} 
-c_5 
\frac{\tilde c R^2}{s}}
}\\
& \leq \frac{c_3}{\nu_a(B(y_0, \sqrt{t}))t^{d_H/d_W}}\brak{\frac{R}{\sqrt{s}}}^\gamma
\exp{
-c_5 
\frac{\tilde c R^2}{s}
},
\end{align}
where the third inequality follows from \eqref{eq:VD_gamma} and the last one is because $\abs{z'_y - y_0} + \sqrt{t} \leq (\varepsilon^2 + \varepsilon)R \leq R$ since we assume $\varepsilon \leq 1/2$.

Note that $0 < s \leq t$ and $0 < t < (\varepsilon R)^2$, hence 
\begin{equation}
	\frac{R}{\sqrt{s}}\geq \varepsilon^{-1}.
\end{equation}
Using the fact that, for positive a, b, and c, 
\begin{equation}
	\xi^a \exp(-c\,\xi^b) \to 0 \text{ as } \xi \to \infty. 
\end{equation}
we can conclude that if $\varepsilon$ is small enough, then 
\begin{equation}
	q_s(z, z') \leq \frac{c/2}{\nu_a(B(y_0, \sqrt{t}))t^{d_H/d_W}}
\end{equation}
where $c$ is the constant as in \eqref{eq:low}. Then combining  \eqref{eq:sep} and \eqref{eq:low}, we get 
\begin{equation}
	\essinf_{z, z' \in A}q_t^D(z, z') \geq \frac{\tilde c/2}{\nu_a(B(y_0, \sqrt{t}))t^{d_H/d_W}}
\end{equation}
\end{proof}

\subsection{Oscillation inequality and H\"{o}lder continuity}
We notice that (LLE) implies 
\begin{equation}\label{eq:LLE_other}
	q_t^{D(z_0, R)}(z, z') \geq \frac{c}{\nu_a(B(y_0, R))R^{2d_H/d_W}}
,
\end{equation}
for $\mu_a \times \mu_a$-almost all $z, z' \in D(z_0,  \varepsilon r) = B(x_0, (\varepsilon r)^{2/d_W})\times B\brak{y_0, \varepsilon r}$ provided $r$ and $t$ satisfies the conditions 
\begin{equation}
	r \leq \sqrt{t} \leq \varepsilon R
	\end{equation}
Indeed, when $r \leq \sqrt{t}$, 
$D(z_0, \varepsilon r) \subset D(z_0, \varepsilon\sqrt{t})$, and also $ \sqrt{t} \leq \varepsilon R \leq R$, hence \eqref{eq:LLE_other} holds.

\subsubsection{Oscillation inequality}
For all $s \in \R$ and $z \in X_a$, define the cylinder 
\begin{equation}
	\mC((s, z), r):= (s- r^2, s) \times D(z, r)
\end{equation}
For any set $A \subset \R \times X_a$ and a function $f$ on $A$, define 
\begin{equation}
	\esssup_{A} f = \sup_{t} \esssup_{x: (x,t)\in A} f(t, x),
\end{equation}
and define $\essinf_A f$ analogously. Define 
\begin{equation}
	\osc_A f:= \esssup_A f - \essinf_A f.
\end{equation}

\begin{proposition}\label{prop:OSC}
For any bounded caloric function $u$ in a cylinder $\mC((s, z_0), R)$, the following inequality holds 
\begin{equation}\label{eq:osc_0}
	\osc_{\mC((s, z_0), \delta R)} u \leq \theta \osc_{\mC((s, z_0), R)} u
\end{equation}	
with constants $\delta, \theta \in(0, 1)$ that depends only on the constants in the hypothesis.
\end{proposition}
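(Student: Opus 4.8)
\textbf{Proof proposal for Proposition \ref{prop:OSC}.}

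The plan is to run the classical oscillation-reduction argument of Moser--De Giorgi type, adapted to the anisotropic cylinders $\mC((s,z_0),r)$, using the local lower estimate (LLE) as the sole quantitative input. First I would reduce to a normalized situation: replacing $u$ by $(u - \essinf_{\mC((s,z_0),R)} u)/\osc_{\mC((s,z_0),R)} u$ (the case $\osc = 0$ being trivial), we may assume $0 \le u \le 1$ on $\mC((s,z_0),R)$. Set $D = D(z_0, R)$ and $A = D(z_0,\varepsilon r)$ for a suitable $r$ with $r \le \sqrt t \le \varepsilon R$, so that the reinforced estimate \eqref{eq:LLE_other} is available, and set $Q_t^D$ to be the killed semigroup on $D$ with kernel $q_t^D$.

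The core step is the following dichotomy. Consider the ``upper'' cylinder at the top of $\mC((s,z_0),R)$, say on the time slab $(s - R^2, s - \tfrac12 R^2)$, and compare to the ``lower'' cylinder $\mC((s,z_0),\delta R)$. Either $u$ or $1-u$ has measure at least one half on a mid-level slice $\{t_1\}\times D(z_0, cR)$; without loss of generality say $\mu_a(\{x \in D(z_0,cR): u(t_1,x) \ge 1/2\}) \ge \tfrac12 \mu_a(D(z_0,cR))$. Since $u$ is caloric, hence supercaloric, and $1-u \ge 0$ is subcaloric, I would apply the super-mean value inequality (Corollary \ref{cor:super_mean_value}, in the form \eqref{eq:SMV_2}) to propagate this positivity: for $t > t_1$ in the remaining time interval,
\[
u(t,\cdot) \;\ge\; Q^D_{t - t_1}\big(u(t_1,\cdot)\big) \;\ge\; \tfrac12\, Q^D_{t-t_1}\big(\mathbbm 1_{\{u(t_1,\cdot)\ge 1/2\}}\big).
\]
Now feed in (LLE) via \eqref{eq:LLE_other}: for $z$ in the small cylinder $D(z_0,\varepsilon r)$ (with $r\asymp \delta R$, $\sqrt{t-t_1}\asymp R$), the kernel $q^D_{t-t_1}(z,z')$ is bounded below by $c\,(\nu_a(B(y_0,R))R^{2d_H/d_W})^{-1}$, while $\nu_a(B(y_0,R))R^{2d_H/d_W} \asymp \mu_a(D(z_0,R))$ by the volume computations (using $t^{d_H/d_W}\asymp \mu(B(x,t^{1/d_W}))$ and the structure of $\mu_a$). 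Hence
\[
u(t,z) \;\ge\; \tfrac{c}{2}\,\frac{\mu_a\big(\{x\in D(z_0,cR): u(t_1,x)\ge 1/2\}\big)}{\mu_a(D(z_0,R))} \;\ge\; \tfrac{c}{4}\,\frac{\mu_a(D(z_0,cR))}{\mu_a(D(z_0,R))} \;=:\; \eta > 0,
\]
where $\eta$ depends only on the constants via (VD), since $\mu_a(D(z_0,cR))/\mu_a(D(z_0,R))$ is bounded below by volume doubling in both the $X$ and $\R$ factors. This gives $u \ge \eta$ on $\mC((s,z_0),\delta R)$ for an appropriate $\delta$ (chosen so that this small cylinder sits inside the region where the above propagation is valid). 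In the other case one gets $1-u \ge \eta$ there, i.e. $u \le 1-\eta$. In either case
\[
\osc_{\mC((s,z_0),\delta R)} u \;\le\; (1-\eta)\,\osc_{\mC((s,z_0),R)} u,
\]
so \eqref{eq:osc_0} holds with $\theta = 1-\eta$.

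The main obstacle is the bookkeeping of the anisotropic scaling: one must choose $\delta$ and the auxiliary radius $r$ and time $t_1$ so that (a) the time interval $(t_1, t)$ used in the super-mean value step has $\sqrt{t-t_1}$ comparable to $R$ so that $\sqrt{t-t_1}\le \varepsilon R$ fails --- actually one needs $\sqrt{t-t_1}\le\varepsilon R$, so $t_1$ should be taken close enough to $s$; and (b) the ball $D(z_0,\varepsilon r)$ where the lower kernel bound is asserted still contains the whole small cylinder $D(z_0,\delta R)$, forcing $\delta R \lesssim \varepsilon r \le \varepsilon\sqrt{t-t_1}\lesssim \varepsilon^2 R$. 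Balancing these constraints fixes $\delta$ in terms of $\varepsilon$ and the doubling constants; this is routine but must be done carefully because the two directions of $X_a$ scale with different exponents ($R^{2/d_W}$ in $X$ versus $R$ in $\R$). A secondary technical point is that, strictly, one should iterate the dichotomy at a fixed geometric ratio and conclude the single-step inequality \eqref{eq:osc_0} from it, or equivalently argue that one application of the positivity-propagation on a well-chosen intermediate scale already yields a definite oscillation gain; either route works and neither introduces new ideas beyond those above.
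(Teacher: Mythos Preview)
Your approach is correct in outline, but the paper takes a shorter and cleaner route that avoids the dichotomy entirely. Rather than splitting cases according to whether $\{u(t_1,\cdot)\ge 1/2\}$ or its complement has large measure, the paper applies the super-mean value inequality \eqref{eq:SMV_2} \emph{twice}: once to the nonnegative caloric function $u-m(R)$ and once to $M(R)-u$, where $m(R),M(R)$ are the essential inf and sup of $u$ on $\mC((s,z_0),R)$. After restricting both integrals to $D(z_0,\delta R)$, inserting the kernel lower bound from \eqref{eq:LLE_other}, and taking essential inf/sup over the small cylinder, one obtains two inequalities whose sum reads
\[
(M(R)-m(R)) - (M(\delta R)-m(\delta R)) \;\ge\; c\,\frac{\mu_a(D(z_0,\delta R))}{\nu_a(B(y_0,R))R^{2d_H/d_W}}\,(M(R)-m(R)),
\]
because the integrands $(u(\xi,\cdot)-m(R))+(M(R)-u(\xi,\cdot))$ add up to the constant $M(R)-m(R)$. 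Volume doubling then bounds the prefactor below by a fixed $c_2\in(0,1)$, giving $\theta=1-c_2$ directly.

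The advantage of the paper's argument is that the measure-splitting step and the accompanying ``either $u$ or $1-u$'' case analysis disappear, and with them the need to control $\mu_a(\{u(t_1,\cdot)\ge 1/2\})$. Your approach is the standard De Giorgi alternative and works fine here, but it incurs exactly the extra bookkeeping you flagged (choosing $c$, $\delta$, $t_1$ so that both the level set and the target cylinder sit inside the region where the LLE bound is valid). The paper sidesteps this by fixing a single time $\xi=s-(\varepsilon R)^2$ and a single $\delta=\varepsilon^2/\sqrt{2}$, for which the constraints $r\le\sqrt{t-\xi}\le\varepsilon R$ are automatic.
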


\begin{proof}
	Let $m(R)$ and $M(R)$ denote the essential infimum and essential supremum of $u$ on $\mC((s, z_0), R)$ respectively. Since $u+$ const is also a caloric function, by the super mean value inequality as in Corollary \ref{cor:super_mean_value}, we have 
	\begin{equation}\label{eq:osc_1}
		u(t,w) - m(R) \geq \int_{D(z_0, R)} q_{t-\xi}^{D(z_0, R)}(w, z) (u(\xi, z) - m(R))\mu_a(dz)
	\end{equation}
	for all $ s-R^2 < \xi < t < s$ and $\mu$-almost all $w\in D(z_0, R)$.

	Taking $\xi = s - (\varepsilon R)^2$, for any $t \in (s-\frac{(\varepsilon R)^2}{2}, s)$, we have $\sqrt{t-\xi} \in (\frac{\varepsilon R}{\sqrt{2}}, \varepsilon R )$. Following from \eqref{eq:LLE_other}, taking $r = \frac{\varepsilon R}{\sqrt{2}}$, we have 
	\begin{equation}\label{eq:osc_2}
		q_{t-\xi}^{D(z_0, R)}(w, z) \geq \frac{c}{\nu_a(B(y_0, R))R^{2d_H/d_W}} \text{ for all $\mu_a$-a.a.}  \ w, z \in D(z_0, \varepsilon r).
	\end{equation}
		
	Taking $\delta = \frac{\varepsilon^2}{\sqrt{2}}$, notice that the set $\mC((s, z_0), \delta R)$ satisfies the condition for both \eqref{eq:osc_1} and \eqref{eq:osc_2}. We could restrict the integration in \eqref{eq:osc_1} to $D(z_0, \delta R)$, use equation \eqref{eq:osc_2} and take the essential infimum in $(t, w) \in D(z_0, \delta R)$, and then
	\begin{equation}
		m(\delta R) - m(R) \geq \frac{c}{\nu_a(B(y_0, R))R^{2d_H/d_W}} \int_{D(z_0, \delta R)} (u(\xi, z) - m(R))\mu_a(dz).
	\end{equation}
	Similarly, for the maximum, 
	\begin{equation}
		M(R) - M(\delta R) \geq \frac{c}{\nu_a(B(y_0, R))R^{2d_H/d_W}} \int_{D(z_0, \delta R)} (M(R) - u(\xi, z))\mu_a(dz).
	\end{equation}
	Taking the difference between the above two equations, because of the volume doubling property, we could find a small enough $c_2<1$ such that, 
	\begin{align*}
		M(R) - m(R) - (M(\delta R) - m(\delta R)) &\geq c\,\frac{\mu_a(D(z_0, \delta R))}{\nu_a(B(y_0, R))R^{2d_H/d_W}}(M(R) - m(R))\\
		&\geq c (M(R) - m(R))\frac{\nu_a(B(y_0, \delta R))(\delta R)^{2d_H/d_W}}{\nu_a(B(y_0, R))R^{2d_H/d_W}}\\
		&\geq c_2 (M(R) - m(R)),
	\end{align*}
	where the second inequality follows from \eqref{eq:ball_size} and the last one from the doubling property of $\nu_a$. Take $\theta = 1-c_2$, we have 
	\begin{equation}
		\theta(M(R) - m(R)) \geq M(\delta R) - m(\delta R),
	\end{equation}
	which proves \eqref{eq:osc_0}.
\end{proof}

\subsubsection{Inhomogeneous H\"{o}lder continuity}
From the oscillation inequality, we could prove the H\"{o}lder continuity of caloric functions. 
\begin{corollary}\label{cor:Inhomo_holder}
	For any bounded caloric function $u$ in a cylinder $\mC((t_0, z_0), R)$, the following inequality holds
	\begin{equation}
		\abs{u(s', z') - u(s'', z'')} \leq C \left(\frac{\sqrt{s' - s''} + d(z'_x,z_x'')^{2/d_W} + \abs{z_y' - z_y''}}{R}\right)^\alpha\osc_{\mC((t_0, z_0), R)}u,
	\end{equation}
	for $dt\times\mu_a$ almost all $(s', z'), (s'', z'') \in \mC ((t_0, z_0), \delta R)$, where $\alpha, \delta \in (0, 1)$ and $C >0$ are constants only depend on the constants in the volume doubling property of $\mu$  and (LLE). 
\end{corollary}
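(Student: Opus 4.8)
The plan is to derive the H\"older estimate from the oscillation inequality of Proposition~\ref{prop:OSC} by the standard iteration argument, but carried out in the anisotropic geometry of $X_a$ where the parabolic scaling is $\sqrt{t}$ in time, $R$ in the $y$-direction, and $R^{2/d_W}$ in the $X$-direction. First I would fix the reference cylinder $\mC((t_0,z_0),R)$ and set $R_k = \delta^k R$. Iterating \eqref{eq:osc_0} gives $\osc_{\mC((t_0,z_0),R_k)} u \leq \theta^k \osc_{\mC((t_0,z_0),R)} u$ for every $k$. Next I would introduce the natural quasi-distance on space-time adapted to the cylinders, namely
\begin{equation}
	\rho\big((s',z'),(s'',z'')\big) := \sqrt{|s'-s''|} + d(z_x',z_x'')^{2/d_W} + |z_y'-z_y''|,
\end{equation}
and observe that $(s'',z'')$ lies in the cylinder $\mC((s',z'),\rho)$ of radius comparable to $\rho$ (up to fixed multiplicative constants coming from the triangle inequality for $d$ and from the elementary inequality $d(z_x',z_x'')^{2/d_W} \lesssim$ the $R$-scale), so that both points sit in a common cylinder of radius $\approx \rho$.

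The core of the argument: given two Lebesgue points $(s',z'),(s'',z'')\in \mC((t_0,z_0),\delta R)$, let $\rho = \rho((s',z'),(s'',z''))$ and choose $k$ so that $R_{k+1} < C_0\rho \le R_k$ for a fixed constant $C_0$ absorbing the geometric constants above. Since the two points lie in a cylinder $\mC((\bar s,\bar z), R_k)$ contained in the reference cylinder (this requires $\rho$ small relative to $\delta R$, which holds because the points are in the $\delta R$-subcylinder), we get
\begin{equation}
	|u(s',z') - u(s'',z'')| \le \osc_{\mC((\bar s,\bar z), R_k)} u \le \theta^k \,\osc_{\mC((t_0,z_0),R)} u.
\end{equation}
Then $\theta^k = \big(\delta^k\big)^{\log\theta/\log\delta} = (R_k/R)^\alpha$ with $\alpha = \log\theta/\log\delta \in (0,1)$, and since $R_k \approx \rho$ this yields exactly the claimed bound $|u(s',z')-u(s'',z'')| \le C(\rho/R)^\alpha \osc_{\mC((t_0,z_0),R)} u$. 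One technical point to dispatch is that Proposition~\ref{prop:OSC} is stated for cylinders centered so that $s$ is the final time; recentering around $(\bar s,\bar z)$ is harmless since caloric functions remain caloric under translation in $t$ and the hypotheses (VD), (LLE) are translation-invariant in both $t$ and the $y$-variable.

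The main obstacle I expect is bookkeeping the anisotropy cleanly: one must verify that the family of cylinders $\mC((\cdot,\cdot),r)$ genuinely forms a nested system under the quasi-metric $\rho$ with uniform constants --- i.e., that $\mC((s'',z''),r') \subset \mC((s',z''),Cr)$ when $\rho \le r$ --- despite the $d_W$-th root appearing in the $X$-direction. The inequality $d(z_x',z_x'')^{2/d_W} + d(z_x'',w_x)^{2/d_W} \gtrsim d(z_x',w_x)^{2/d_W}$, needed to chain balls in $X$, is a consequence of the triangle inequality for $d$ together with the concavity/subadditivity of $r\mapsto r^{2/d_W}$ (valid since $d_W \ge 2$). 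Once this quasi-metric structure is recorded, the Hölder iteration is routine and identical in form to the classical De~Giorgi--Nash--Moser oscillation-to-regularity passage; no heat-kernel input beyond Proposition~\ref{prop:OSC} is used.
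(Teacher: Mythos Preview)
Your strategy is the paper's strategy: iterate Proposition~\ref{prop:OSC} on a chain of shrinking anisotropic cylinders and convert the geometric decay $\theta^k$ into the power $(\rho/R)^\alpha$ with $\alpha=\log\theta/\log\delta$. The only step that needs tightening is the displayed inequality
\[
\osc_{\mC((\bar s,\bar z),R_k)} u \le \theta^k\,\osc_{\mC((t_0,z_0),R)} u.
\]
This does \emph{not} follow from the iteration at the fixed center $(t_0,z_0)$ you wrote first, since the two given points need not lie in $\mC((t_0,z_0),R_k)$. It follows from iterating Proposition~\ref{prop:OSC} at the \emph{recentered} point $(\bar s,\bar z)$, and for that you must know that the entire chain $\mC((\bar s,\bar z),\delta^{-j}R_k)$, $j=0,\dots,k$, stays inside $\mC((t_0,z_0),R)$ --- not merely that the innermost cylinder does, which is all you assert. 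The paper carries out precisely this containment check: with $(\bar s,\bar z)$ in the $\delta R$-subcylinder and $\delta$ chosen small (there $\delta\le 1/\sqrt 2$), one verifies coordinate by coordinate that $\mC((\bar s,\bar z),\delta^{-k}r)\subset\mC((t_0,z_0),R)$ whenever $\delta^{-k}r\le\delta R$; the $X$-coordinate needs the separate inequality $(\delta R)^{2/d_W}+(\delta^{-k}r)^{2/d_W}\le R^{2/d_W}$. Once that is in place your argument is complete and matches the paper's.
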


\begin{proof}
	We prove the following equivalent form: for any $r>0$ and $dt\times \mu_a$-almost all $(s', z'), (s'', z'') \in \mC ((t_0, z_0), \delta R)$ such that 
	\begin{align}\label{eq:holder_r}
			\sqrt{s' - s''} + d(z'_x,z_x'')^{2/d_W} + \abs{z_y' - z_y''} < r
	\end{align}
	the following inequality holds:
	\begin{align}
		\abs{u(s', z') - u(s'', z'')} \leq C \brak{\frac{r}{R}}^\alpha \osc_{\mC((t_0, z_0), R)}u.
	\end{align}
	It is sufficient to show that any two points $(s', z'), (s'', z'') \in \mC ((t_0, z_0), \delta R)$ with condition \eqref{eq:holder_r} are contained in an open subset $\Omega \subset \mC((t_0, z_0), R)$ such that 
	\begin{align}
		\osc_{\Omega}u \leq C \brak{\frac{r}{R}}^\alpha \osc_{\mC((t_0, z_0), R)}u.
	\end{align}

	Now we construct the set $\Omega$ as follows. First, we assume that the equation \eqref{eq:holder_r} holds. Suppose $s' \geq s''$. Let $w = z'$ and we wish to choose $t$ so that
	\begin{align}\label{eq:tw}
		\sqrt{t - s} + d(w_x,z_x)^{2/d_W} + \abs{w_y - z_y} < r
	\end{align}
	for both $(s, z) = (s', z')$ and $(s, z) = (s'', z'')$. For $(s, z) = (s', z')$, 
	\begin{equation}
		\sqrt{t - s'} + d(w_x, z_x')^{2/d_W} + \abs{w_y - z_y'} = \sqrt{t - s'}
	\end{equation}
	And for $(s, z) = (s'', z'')$, 
	\begin{equation}
		\sqrt{t - s''} + d(w_x, z_x'')^{2/d_W} + \abs{w_y - z_y''} = \sqrt{t - s''} + d(z_x', z_x'')^{2/d_W} + \abs{z_y' - z_y''}
	\end{equation}
	Because of the strict inequality in \eqref{eq:holder_r}, we can choose $t$ strictly larger than $s'$, so that $s'' \leq s' < t < t_0$. Through this construction, for both $(s, z) = (s', z')$ and $(s, z) = (s'', z'')$, 
	\begin{align}
		\sqrt{t - s} < r &\Longrightarrow s \in (t-r^2, t),\\
		d(w_x, z_x)^{2/d_W} < r &\Longrightarrow z_x \in B(w_x, r^{2/d_W}),\\
		\abs{w_y, z_y} \leq r & \Longrightarrow z_y \in B(w_y, r). 
	\end{align}
	Hence both $(s', z')$ and $(s'', z'')$ are in $\mC((t, w), r)$ once they satisfies \eqref{eq:holder_r}.
	Since $(s', z'), (s'', z'') \in \mC ((t_0, z_0), \delta R)$, we also have $(s', z'), (s'', z'') \in \mC ((t_0, z_0), R)$. Then we can define the set 
	$$\Omega = \mC((t, w), r)\cap \mC((t_0, z_0), R).$$ 
	Note that by construction, we have $(t, w) \in C((t_0, z_0), \delta R)$, i.e.
	\begin{align}\label{eq:tw_0}
		t_0-(\delta R)^2 < t < t_0, \ d(w_x, x_0) \leq (\delta R)^{2/d_W},
		\text{ and } \abs{w_y - y_0}\leq \delta R .
	\end{align}

Next, we find $\delta$, such that $\delta^{-k} r \leq \delta R$ implies $C((t, w), \delta^{-k} r) \subset C((t_0, z_0), R)$ for any integer $k \geq 1$. This condition means
	\begin{equation}
		t_0 - R^2 < t - (\delta^{-k}r)^2, \ 
		d(w_x, x_0) + (\delta^{-k}r)^{2/d_W} \leq R^{2/d_W},
		 \text{ and } \abs{w_y - y_0} + \delta^{-k}r \leq R
	\end{equation}
	Because of \eqref{eq:tw_0}, it suffices to require that 
	\begin{equation}
		(\delta R)^2 + (\delta^{-k} r)^2 \leq R^2,\ 
		(\delta^{-k}r)^{2/d_W} + (\delta R)^{2/d_W} \leq R^{2/d_W}
		\text{, and } \delta R + \delta^{-k} r \leq R.
	\end{equation}
	If we require the above equation hold when $\delta^{-k} r \leq \delta R$, we need to have 
	\begin{equation}
		2(\delta R)^2 \leq R^2, \ 2(\delta R)^{2/d_W} \leq R^{2/d_W}
		 \text{, and } 2\delta R  \leq R.
	\end{equation}
	which requires $\delta \leq \frac{1}{\sqrt{2}}$. 
	
	Given this $\delta$, we first consider the case that $r$ is small, such that $\delta^{-k} r \leq \delta R$ for some integer $k \geq 1$. By the previous argument, we have  $C((t, w), \delta^{-k} r) \subset C((t_0, z_0), R)$ for some $k\geq 1$, we have 
	$$\Omega = C((t, w), r)\subset C((t, w), \delta^{-k} r) \subset  C((t_0, z_0), R).$$
	Hence by oscillation inequality \eqref{eq:osc_0}, 
	\begin{equation}
		\osc_{\Omega} u \leq \theta^k \osc_{C((t, w), \delta^{-k}r)}u \leq \theta^k \osc_{C((t_0, z_0), R)}u.
	\end{equation}
	Note that $\delta$ might not be the same constant as in \eqref{eq:osc_0}. But the oscillation inequality still holds by taking small enough $\delta$. 
	Now $\delta^{-k} r \leq \delta R$ implies
	\begin{equation}
		k \geq \lfloor\frac{\log\frac{r}{R}}{\log \delta}\rfloor -1 \geq \frac{\log\frac{r}{R}}{\log \delta } - 2.
	\end{equation}
	Hence $$\theta^k \leq e^{\frac{\log\frac{r}{R}}{\log \delta}\log \theta} \theta^{-2} = C \left(\frac{r}{R}\right)^\alpha,$$
	where $C = \theta^{-2}$ and $\delta < \theta$ such that $\alpha = \frac{\log \theta}{\log \delta} < 1$ (again by taking small enough $\delta$).
	And then 
	\begin{align}
		\osc_{\Omega} u	\leq  C \left(\frac{r}{R}\right)^\alpha \osc_{C((t_0, z_0), R)}u
	\end{align}

	Then we consider large $r$ such that $r >\delta ^2 R$. Notice that $\delta \leq \frac{1}{\sqrt{2}}$ and $\Omega \subset C((t_0, z_0), R)$, we have 
	\begin{equation}
		\osc_{\Omega} u \leq \osc_{C((t_0, z_0), R)} u = C \left(\frac{r}{R}\right)^\alpha \osc_{C((t_0, z_0), R)}u
	\end{equation}
	for any constant $C \geq \left(\frac{R}{r}\right)^\alpha > \delta^{-2\alpha}$. By taking the constant $C = \max(\delta^{-2\alpha}, \theta^{-2})$, we finished the proof.
\end{proof}

\subsection{Proof of the inhomogeneous Harnack inequality}
We will prove inhomogeneous Parabolic Harnack inequality.
\begin{theorem}[Inhomogeneous Parabolic Harnack inequality]\label{thm:LLE_PHI}
	Let $u$ be a bounded non-negative caloric function in the cylinder 
	$$
	Q = (0, (\varepsilon R)^2) \times D(z_0, R) = (0, (\varepsilon R)^2) \times B(x_0, R^{2/d_W})\times B(y_0, R)
	$$	
	where $z_0 = (x_0, y_0)$ and $z_0 \in X, y_0 \in \R$, with arbitrary $R > 0$. Here $\varepsilon$ is the parameter from Definition \ref{def:LLE}, $l = \frac{1}{\sqrt{2}}$, and then 
	$$
	\inf_{Q_+}u \leq 1 \Longrightarrow \sup_{Q_-}u \leq C
	$$ 
	where 
	\begin{align}
		Q_- &= ((l^3\varepsilon R)^2, (l^2\varepsilon R)^2) \times D(z_0, \eta R) = ((l^3\varepsilon R)^2, (l^2\varepsilon R)^2) \times B(x_0, (\eta R)^{2/d_w})\times B(y_0, \eta R),\\
		Q_+ &= ((l\varepsilon R)^2, (\varepsilon R)^2) \times D(z_0, \eta R) = ((l\varepsilon R)^2, (\varepsilon R)^2)\times B(x_0, (\eta R)^{2/d_w})\times B(y_0, \eta R),
	\end{align}
	and the constants $\eta \in (0, 1), C>1$ depend only on the constants in the hypothesis. 
\end{theorem}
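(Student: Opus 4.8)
I would deduce this from the two preceding building blocks --- the super-mean value inequality (Corollary \ref{cor:super_mean_value}) and the localized lower estimate \eqref{eq:LLE_other} --- together with one external ingredient, the parabolic local maximum principle for non-negative subcaloric functions on $X_a$. After the harmless rescaling $u \mapsto \lambda u$ the claim is the scale-invariant statement $\sup_{Q_-} u \le C \inf_{Q_+} u$, and the plan is to sandwich a single space--time average $A$ of $u$ between the two sides. Write $T = (\varepsilon R)^2$, so $Q = (0,T) \times D(z_0,R)$, $Q_-$ has time-window $(T/8, T/4)$, $Q_+$ has time-window $(T/2, T)$, and both $Q_{\pm}$ have spatial part $D(z_0,\eta R)$; let $A$ be the average of $u$ over the box $(0,T/4) \times D(z_0, \varepsilon^2 R/2)$ with respect to $dt\,d\mu_a$. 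The whole proof then reduces to
\begin{equation}\label{eq:plan-sandwich}
	\sup_{Q_-} u \;\le\; C_1 A \qquad\text{and}\qquad A \;\le\; C_2 \inf_{Q_+} u ,
\end{equation}
with $\eta$ fixed small (depending only on $\varepsilon$) at the very end.

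For the \emph{second} inequality in \eqref{eq:plan-sandwich} (a ``weak Harnack'' lower bound), I would fix $(t_+,z_+) \in Q_+$ and $0 < s < T/4$ and apply Corollary \ref{cor:super_mean_value} on $\Omega = D(z_0,R)$: since $u$ is non-negative and caloric, hence supercaloric, on $(0,T)\times D(z_0,R)$, this gives $u(t_+,z_+) \ge \int_{D(z_0,R)} q^{D(z_0,R)}_{t_+-s}(z_+,z)\,u(s,z)\,d\mu_a(z)$. Here $t_+ - s \in (T/4, T)$, so $\varepsilon R/2 \le \sqrt{t_+-s} \le \varepsilon R$, and \eqref{eq:LLE_other} with $r = \sqrt{t_+-s}$ gives $q^{D(z_0,R)}_{t_+-s}(z_+,z) \gtrsim \mu_a(D(z_0,R))^{-1}$ whenever $z_+,z \in D(z_0,\varepsilon r)$, in particular for all $z \in D(z_0,\varepsilon^2 R/2)$ and all $z_+ \in D(z_0,\eta R)$ once $\eta \le \varepsilon^2/2$. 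Restricting the integral to $D(z_0,\varepsilon^2 R/2)$, replacing $\mu_a(D(z_0,R))$ by $\mu_a(D(z_0,\varepsilon^2 R/2))$ up to a doubling constant, and averaging over $s \in (0,T/4)$ yields $\inf_{Q_+} u \ge c A$. For the \emph{first} inequality I would use that, although the two-sided sub-Gaussian estimate fails on $X_a$, its \emph{on-diagonal} half is available: rewriting \eqref{eq:HKE} via $t^{d_H/d_W} \asymp \mu(B(x,t^{1/d_W}))$ gives $q_t(z,z') \lesssim \mu_a(D(z,\sqrt t))^{-1}$. Since $\mE_a$ is a strongly local regular Dirichlet form (Theorem \ref{thm:X_a_Dirichlet_space}) and the family $\{D(z,r)\}$ is doubling, this diagonal bound is equivalent, by the classical Faber--Krahn/Nash $+$ Moser-iteration chain (run with the sets $D(z,r)$ and the cylinders $\mC((\tau,z),\rho)$ in place of metric balls and parabolic cylinders, carrying the weight $d\mu_a$), to the local maximum principle $\esssup_{\mC((\tau,z_0),\rho)} u \le C\,\rho^{-2}\mu_a(D(z_0,\rho))^{-1}\iint_{\mC((\tau,z_0),2\rho)} u \,dt\,d\mu_a$ for all $\mC((\tau,z_0),2\rho) \subset Q$. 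Applying this slice by slice at scale $\rho \sim \eta R$ around each $(t',z_0)$, $t' \in (T/8,T/4)$ --- admissible for $\eta$ small, since then $\mC((t',z_0),2\rho) \subset (0,T/4)\times D(z_0,\varepsilon^2 R/2) \subset Q$ --- bounds $\esssup_{D(z_0,\eta R)} u(t',\cdot)$ by $C$ times the average of $u$ over $\mC((t',z_0),2\rho)$, which by volume doubling is $\le C_1 A$; taking the supremum over $t'$ gives the first inequality in \eqref{eq:plan-sandwich}. The parameters $l = 1/\sqrt 2$, $\rho \sim l^k \varepsilon R$ and $\eta$ small in the statement are exactly these admissibility constraints in disguise.

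The hard part is the local maximum principle in the \emph{anisotropic, weighted} geometry of $X_a$: the sets $D(z,r) = B(z_x, r^{2/d_W}) \times B(z_y,r)$ are not metric balls of a length space, so one must check that their volume doubling together with the diagonal heat kernel bound still powers the Moser iteration, and one must handle the degeneracy of $\nu_a = |y|^a\,dy$ on $\{y = 0\}$ --- it is an $A_2$-weight, so the required Sobolev/Poincar\'e-type inequalities on the $D(z,r)$ persist, but this has to be spelled out. Everything after that is bookkeeping: tracking inclusions of cylinders under the scaling $(t,x,y) \leftrightarrow (R^2, R^{2/d_W}, R)$, which is precisely what pins down the windows $(T/8,T/4)$ and $(T/2,T)$ and the smallness of $\eta$, and checking that a \emph{single} super-mean-value jump (not a chain of them) is enough for \eqref{eq:plan-sandwich} --- which is why a macroscopic time-gap is built in between $Q_-$ and $Q_+$.
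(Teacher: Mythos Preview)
Your approach is genuinely different from the paper's. The paper proves the result via a \emph{growth lemma} (Lemma \ref{lem:LLE_PHI}): combining the super-mean-value inequality, LLE, and the oscillation inequality (Proposition \ref{prop:OSC}), one shows that if $\inf_{Q_+}u\le 1$ and $u(t,z)\ge\lambda$ at some point of $\widetilde Q$ with $\lambda$ large relative to a local scale $r$, then $u$ must exceed $K\lambda$ somewhere in $\mC((t,z),r)$ for a fixed $K>1$. Iterating this produces a sequence along which $u$ grows geometrically; since $u$ is bounded, the sequence must eventually exit a compact region, and summing the displacements in each of the three directions (the $X$-component, the $\R$-component, and time) bounds the starting value $\lambda_0$. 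This argument uses \emph{only} LLE and VD, matching the theorem's stated hypothesis exactly.

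Your route --- sandwiching a single space--time average $A$ between $\sup_{Q_-}u$ and $\inf_{Q_+}u$ --- is the Moser splitting, and your weak-Harnack half ($A\lesssim\inf_{Q_+}u$) is essentially the computation inside Lemma \ref{lem:LLE_PHI}. The problem is the other half. You correctly flag the local maximum principle as ``the hard part,'' but the appeal to the ``classical Faber--Krahn/Nash $+$ Moser-iteration chain'' does not close it: that equivalence is developed for metric balls in a length space with a single space--time scaling $t\sim r^\beta$, whereas your sets $D(z,r)=B(z_x,r^{2/d_W})\times B(z_y,r)$ are anisotropic products and are not the balls of any length metric on $X_a$. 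That one may simply ``run with the sets $D(z,r)$ in place of metric balls'' is precisely what is at stake --- it is the content of the paper's observation that $X_a$ does \emph{not} support standard sub-Gaussian estimates --- and would require establishing a localized Sobolev or Faber--Krahn inequality on these product sets, compatible with the weight $|y|^a$ and the two distinct scalings, before any iteration can start. Note also that you import the on-diagonal \emph{upper} bound from (HKE-a), which is strictly more than the stated (LLE) hypothesis; the paper's growth-lemma argument is arranged to yield the clean implication LLE $\Rightarrow$ PHI and sidesteps both the upper bound and the Moser machinery by leaning on Proposition \ref{prop:OSC} instead. Your scheme is plausible in the ambient context where (HKE-a) is available, but the local maximum principle here is a theorem to be proved, not a citation.
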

The essential part of the proof is the following lemma. 
\begin{lemma}\label{lem:LLE_PHI}
	There exists a constant $\sigma$ with $\eta < \sigma < 1$, $K >1$, and $C>0$, depending only on the constants in the hypothesis, such that the following holds. 
	For every $(t, z) \in Q$ and $ r > 0$ be such that
	\begin{equation}\label{eq:Lem_LLE_PHI_C}
		\mC((t, z), r) \subset \widetilde{Q} := (0, (l^2\varepsilon R)^2) \times D(z_0, \sigma R).
	\end{equation}	
	Suppose $\inf_{Q_+}u \leq  1$ and $u(t, z) \geq \lambda$ with
	\begin{equation}\label{eq:lambda_PHI}
		 \lambda \geq C\left(\frac{R}{r}\right)^{\gamma},
	\end{equation}
	where $\gamma$ is the constant in \eqref{eq:VD_gamma}, then
	$$
	\sup_{\mC((t, z), r) }u \geq K\lambda.
	$$
\end{lemma}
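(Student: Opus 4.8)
The plan is to argue by contradiction, following the classical expansion-of-positivity / growth-lemma scheme that turns an oscillation-type estimate plus a lower heat kernel bound into a parabolic Harnack inequality. Suppose the conclusion fails, so that for a cylinder $\mC((t,z),r)\subset\widetilde Q$ we have $u(t,z)\geq\lambda$ with $\lambda\geq C(R/r)^\gamma$ but $\sup_{\mC((t,z),r)}u < K\lambda$. The first step is to use the super-mean value inequality (Corollary \ref{cor:super_mean_value}) for the non-negative caloric function $u$ on the domain $D(z_0,R)$: for a point $(t',w)$ with $t'>t$ one has
\begin{equation}\label{eq:plan_smv}
	u(t',w) \geq \int_{D(z_0,R)} q^{D(z_0,R)}_{t'-t}(w,\zeta)\, u(t,\zeta)\,\mu_a(d\zeta).
\end{equation}
Near $(t,z)$ the solution is bounded below by $\lambda/2$ on a cylinder of comparable size (this is where the oscillation inequality, Proposition \ref{prop:OSC}, or rather its H\"older refinement Corollary \ref{cor:Inhomo_holder}, enters: since $u(t,z)\geq\lambda$ and $\sup u < K\lambda$, continuity forces $u\geq\lambda/2$ on $\mC((t,z),c r)$ for a small structural constant $c$, provided we first know $\lambda$ dominates the oscillation contribution, which is exactly the role of the lower bound \eqref{eq:lambda_PHI} with the $(R/r)^\gamma$ scaling). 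Plugging this into \eqref{eq:plan_smv} together with the local lower estimate \eqref{eq:LLE} (in the form \eqref{eq:LLE_other}) gives, for suitably chosen $t'$, a lower bound of the shape $u(t',w) \gtrsim \lambda \cdot \mu_a(D(z,cr))/(\nu_a(B(y_0,R))R^{2d_H/d_W})$ on a ball around $z$.

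The second step is to iterate: having spread the positivity of $u$ from a ball of radius $\sim r$ (with height $\gtrsim\lambda$) to a larger ball of radius $\sim r/\varepsilon$ (with a height reduced only by the fixed volume ratio), one repeats. Because the time variable only has room $(l^2\varepsilon R)^2$ in $\widetilde Q$ and each step advances time by a fixed fraction of $r^2$ at the current scale, after $O(\log(R/r))$ steps the radius reaches the fixed scale $\eta R$ and the positivity set has expanded to cover $D(z_0,\eta R)$ at a time level inside the interval defining $Q_-$; more precisely one iterates until one lands inside $Q_+$ where $\inf u\leq 1$ is assumed. At each step the multiplicative loss in the height is a fixed constant (the volume-doubling ratio in the anisotropic product, controlled via Lemma \ref{lem:CVD}), so after $N\sim \log(R/r)/\log(1/\varepsilon)$ steps the height has dropped by a factor $\rho^N$ for some fixed $\rho\in(0,1)$; comparing $\lambda\rho^N$ against the value $1$ available from $\inf_{Q_+}u\leq 1$ forces $\lambda \lesssim \rho^{-N}\sim (R/r)^{\log(1/\rho)/\log(1/\varepsilon)}$. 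Choosing the constant $C$ in \eqref{eq:lambda_PHI} larger than this structural constant and the exponent $\gamma$ at least $\log(1/\rho)/\log(1/\varepsilon)$ (enlarging $\gamma$ if necessary, which is harmless since it only strengthens the hypothesis \eqref{eq:lambda_PHI}) yields the contradiction, and simultaneously fixes $K$ (any value $>1$ such that the one-step gain beats the one-step loss over the chosen number of iterations).

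The subtle points, and where I expect the real work to lie, are twofold. The \emph{first} and main obstacle is bookkeeping the anisotropic scaling consistently through the iteration: balls in $X$ scale like $R^{2/d_W}$ while intervals in $\R$ scale like $R$, so at each step "radius $r$" means $B(x_0,r^{2/d_W})\times B(y_0,r)$ and the containment conditions $\mC((t,z),\delta^{-k}r)\subset\widetilde Q$ have to be checked in both factors separately (exactly as in the proof of Corollary \ref{cor:Inhomo_holder}); one must verify that the time budget $(l^2\varepsilon R)^2$ and the space budget $\sigma R$ are large enough to accommodate all iterations before the radius reaches $\eta R$, which forces the relation $\eta<\sigma<1$ and the specific use of the factor $l=1/\sqrt2$. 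The \emph{second} point is making the "$\lambda$ beats the oscillation" step rigorous: one wants that on the small cylinder the solution does not dip below $\lambda/2$, and this is precisely guaranteed by Corollary \ref{cor:Inhomo_holder} which gives $\osc_{\mC((t,z),cr)}u \leq C'(c r/r')^\alpha\osc\, u$ for an appropriate reference cylinder of size $r'$ inside $\widetilde Q$ where $\osc u \leq \sup_{\widetilde Q}u \lesssim K\lambda$ — so one needs $c^\alpha K < 1/2$, which pins down $c$ once $K$ is fixed, and the whole scheme closes provided the free constants are chosen in the order $K$, then $c$, then the number of iterations, then $C$ and $\gamma$.
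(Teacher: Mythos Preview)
Your approach by contradiction is a viable alternative, but it is more complicated than necessary and rests on a misreading of how the local lower estimate is used here. The iteration in your second step is unnecessary: the bound \eqref{eq:LLE_other} already asserts that $q^{D(z_0,R)}_{T-t}(w,w')\geq c/(\nu_a(B(y_0,R))R^{2d_H/d_W})$ for \emph{all} $w,w'\in D(z_0,\sigma R)$ (with $\sigma\leq\varepsilon\kappa$) whenever $T-t$ lies in the fixed range $[(\kappa R)^2,(\varepsilon R)^2]$. The kernel bound is not near-diagonal, so one does not need to chain short steps from scale $r$ up to scale $\eta R$; a single application of the super-mean-value inequality from time $t$ directly to a time $T$ in the range of $Q_+$ already spreads the positivity over all of $D(z_0,\eta R)$. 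Consequently your analysis of the iteration exponent $\log(1/\rho)/\log(1/\varepsilon)$ and the remark about ``enlarging $\gamma$'' are beside the point.

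The paper exploits this in a direct (non-contradiction) argument. It applies SMV from $t$ to $T\in((l\varepsilon R)^2,(\varepsilon R)^2)$, restricts the integral to $D(z,\delta r)$, and uses the hypothesis $\inf_{Q_+}u\leq 1$ to bound the average of $u(t,\cdot)$ over $D(z,\delta r)$, hence $\inf_{D(z,\delta r)}u(t,\cdot)\leq\Lambda$ with $\Lambda\lesssim (R/r)^{\gamma+2d_H/d_W}$. Since $u(t,z)\geq\lambda$, this forces $\osc_{\mC((t,z),\delta r)}u\geq\lambda-\Lambda$; Proposition~\ref{prop:OSC} read in the expansion direction then gives $\sup_{\mC((t,z),r)}u\geq\osc_{\mC((t,z),r)}u\geq\theta^{-1}(\lambda-\Lambda)$, and choosing $K=(\theta^{-1}+1)/2$ yields $\sup\geq K\lambda$ once $\Lambda\leq(1-\theta)\lambda/2$, which is precisely the role of \eqref{eq:lambda_PHI}. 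Your scheme, once the iteration is collapsed to a single step, is the contrapositive of this computation; the direct route avoids the extra H\"older step and the bookkeeping of the sub-cylinder where $u\geq\lambda/2$.
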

\begin{proof}
By the super-mean-value inequality \eqref{eq:SMV_2}, for all $t < T < (\varepsilon R)^2$ and $\mu_a$-almost all $w \in D(z_0, R)$, we have 
\begin{equation}\label{eq:SMV_Harnack}
	u(T, w) \geq \int_{D(z_0, R)}q_{T-t}^{D(z_0, R)}(w, w') u(t, w') \mu(dw').
\end{equation}
Restrict $T$ to the interval $(l \varepsilon R)^2 < T < (\varepsilon R)^2$. For any $0 < t < (l^2\varepsilon R)^2$, it follows that 

\begin{equation}
	(\kappa R)^2 < T-t < (\varepsilon R)^2,
\end{equation}
where $\kappa = l^2\varepsilon$, which is true since $1 = 1/\sqrt{2}$. Applying the equivalent form of the LLE as in \eqref{eq:LLE_other} with $r = \kappa R$, we obtain 
\begin{equation}\label{eq:LLE_other_Harnack}
	q_{T-t}^{D(z_0, R)}(w, w') \geq \frac{c_1}{\nu_a(B(y_0, R))R^{2d_H/d_W}}
\end{equation}
for almost all $w, w' \in D(z_0, \varepsilon\kappa R)$. 
Assume that $\sigma$ is so small, such that 
\begin{equation}
	\sigma \leq \varepsilon\kappa,
\end{equation}
then \eqref{eq:LLE_other_Harnack} holds for almost all $w, w' \in D(z_0, \sigma R)$. 

Then notice that $D(z, r) \subset D(z_0, \sigma R) \subset D(z_0, R)$ by the assumption $\mC((t, z), r) \subset \tilde Q$, we restrict the integration of \eqref{eq:SMV_Harnack} to $D(z, r)$, so that 
\begin{align}
	u(T, w) & \geq \int_{D(z, \delta r)}q_{T-t}^{D(z_0, R)}(w, w') u(t, w') \mu_a(dw')\\
	& \geq \frac{c_1}{\nu_a(B(y_0, R))R^{2d_H/d_W}}
	\int_{D(z, \delta r)} u(t, w') \mu_a(dw')
\end{align}
where {$\delta \in (0,1)$ is as in Proposition \ref{prop:OSC}} and  in the second inequality we applied \eqref{eq:LLE_other_Harnack}. In particular, this inequality holds for all $(T, z) \in Q_+$ since we assumed $\eta < \sigma$. Taking the infimum in $(T, z) \in Q_+$ and using that $\inf_{Q_+} u \leq 	1$, we obtain, 
	\begin{equation}
		1 \geq \frac{c_1}{\nu_a(B(y_0, R))R^{2d_H/d_W}}
	\int_{D(z, \delta r)} u(t, w') \mu_a(dw')
	\end{equation}
	And using  \eqref{eq:ball_size}, we find
	\begin{equation}
		\inf_{w' \in D(z, \delta r)}u(t, w') \leq \Lambda := \frac{\nu_a(B(y_0, R))R^{2d_H/d_W}}{c_1\nu_a(B(z_y, \delta r))(\delta r)^{2d_H/d_W}},
	\end{equation}
	for all $t \in (0, (l^2\varepsilon R)^2)$. Combining with the hypothesis that $u(t, z) \geq \lambda$, we see that 
	\begin{equation*}
		\osc_{\mC((t, z), \delta r)}u \geq \lambda - \Lambda,
	\end{equation*}
	Whence by Proposition \ref{prop:OSC}, 
	\begin{equation*}
		\sup_{\mC((t, z), r)} u \geq \osc_{\mC((t, z), r)} u \geq \theta^{-1} (\lambda - \Lambda).
	\end{equation*}
	where $\theta \in (0, 1)$ is the constant from Proposition \ref{prop:OSC}. We are left to check that
	\begin{align}
		\theta^{-1}(\lambda - \Lambda) \geq K\lambda,
	\end{align}
	with a constant $K > 1$. 
	By the \eqref{eq:VD_gamma}, we have 
	\begin{align}
		\Lambda \leq \frac{C_{VD}}{c_1} \brak{\frac{\abs{y_0 - z_y}+ R}{r}}^\gamma \brak{\frac{R}{\delta r}}^{2d_H/d_W}
		\leq C_1 \brak{\frac{R}{r}}^{\gamma + 2d_H/d_W},
	\end{align}
	where $C_1 = C_1(c_1, C_{VD}, \gamma, \delta)$. If we let 
	\begin{align}
		\lambda \geq \frac{2C_1}{1-\theta}\brak{\frac{R}{r}}^{\gamma + 2d_H/d_W},
	\end{align}
	Then we have $\Lambda \leq \lambda((1-\theta)/2)$, hence
	\begin{align}
		\theta^{-1} (\lambda - \Lambda) \geq \frac{\theta^{-1} +1}{2}\lambda,
	\end{align}
	so that we can let $K = (\theta^{-1} +1)/2>1$ and this completes the proof.
\end{proof}

Given this lemma, we can prove the Theorem \ref{thm:LLE_PHI}. 
\begin{proof}[Proof of Theorem \ref{thm:LLE_PHI}]
	First, we can derive the following property from Lemma \ref{lem:LLE_PHI}.  Define a function 
	\begin{equation}
		\rho(\lambda) = \frac{R}{(C^{-1} \lambda)^{1/\gamma}}
	\end{equation}
	so that the condition \eqref{eq:lambda_PHI} is equivalent to $r \geq \rho(\lambda)$. Take $r = \rho(\lambda)$ as in \eqref{eq:Lem_LLE_PHI_C}. If for some point $(s, w) \in Q$, we have $\lambda := u(s, w) > 0$ and 
	\begin{equation}
		\mC((s, w), \rho(\lambda)) \subset \widetilde{Q},	
	\end{equation}
	then there exists a point $(s', w') \in \mC((s, w), \rho(\lambda))$ such that $u(s', w') \geq K \lambda$. 
	
	We can start with an arbitrary point $(s_0, w_0) \in Q_-$ where $\lambda_0 = u(s_0, w_0) > 0$. Assuming that 
	\begin{equation}
		\mC((s_0, w_0), \rho(\lambda_0)) \subset \widetilde{Q},	
	\end{equation}
	we can choose a point $(s_1, w_1) \in \mC((s_0, w_0), \rho(\lambda_0))$ where 
	\begin{equation}
		\lambda_1 = u(s_1, w_1)\geq K \lambda_0.
	\end{equation}
	If again
	\begin{equation}
		\mC((s_1, w_1), \rho(\lambda_1)) \subset \widetilde{Q},	
	\end{equation}
	we can also find a point $(s_2, w_2) \in \mC((s_1, w_1), \rho(\lambda_1))$ such that 
	\begin{equation}
		\lambda_2 = u(s_2, w_2)\geq K \lambda_1 \geq K^2 \lambda_0.
	\end{equation}
	Following this procedure, we obtain a sequence of points $\{(s_n, w_n)\}$ such that 
	\begin{equation}
		\lambda_n = u(s_n, w_n) \geq K^n \lambda_0.
	\end{equation}
	and 
	\begin{equation}
		(s_n, w_n) \in C((s_{n-1}, w_{n-1}), \rho(\lambda_{n-1})) \subset \widetilde{Q}.
	\end{equation}
	
	We will continue the construction until 
	\begin{equation}\label{eq:finite_n}
		C((s_n, w_n), \rho(\lambda_n)) \not\subset \widehat{Q}:= [(l^4\varepsilon R)^2, (l^2\varepsilon R)^2]\times\overline{D(z_0, \sigma R)}
	\end{equation}
	If such $n$ does not exit, then we obtain an infinite sequence $(s_n, w_n) \in \widehat{Q}$ such that $u(s_n, w_n)\rightarrow \infty$, which is not possible since the function $u$ is bounded in $\widehat{Q} \subset Q$.  
	
	Hence there exists an $n$ such satisfies \eqref{eq:finite_n}, which implies 3 cases, 
	\begin{equation}
		w^x_n \notin B(x_0, (\sigma R)^{2/d_W}),\ 
		w^y_n \notin B(y_0, \sigma R), \text{ or }
		s_n < l^4 \varepsilon R,
	\end{equation}
	where we use the notation $w_n = (w^x_n, w^y_n)$ with $w^x_n \in X$ and $w^y_n \in \R$. 
	
	In the first case, we have 
	\begin{equation}
		 d(w^x_0, w^x_n)\geq d(x_0, w^x_n) - d(x_0, w^x_0) \geq (\sigma^{2/d_W} - \eta^{2/d_W})R^{2/d_W}
	\end{equation}
	On the other hand, 
	\begin{equation}
		 d(w^x_0, w^x_n)\leq \sum_{k = 0}^{n-1}d(w_k^x, w^x_{k+1})\leq \sum_{k = 0}^{n-1}\rho(\lambda_k)^{2/d_W}\leq \sum_{k = 0}^{n-1}\rho(K^k \lambda_0)^{2/d_W} \leq C_2 R^{2/d_W}\lambda_0^{-2/(\gamma d_W)}
	\end{equation}
	Similarly, in the second case, we have the lower bound for $\abs{w^y_0- w^y_n}$,
	\begin{equation}
		 \abs{w^y_0-w^y_n}\geq \abs{y_0-w^y_n} - \abs{y_0-w^y_0} \geq (\sigma - \eta)R
	\end{equation}
	and then the upper bound,
	\begin{equation}
		 \abs{w^y_0-w^y_n}\leq \sum_{k = 0}^{n-1}\abs{w_k^y - w^y_{k+1}}\leq \sum_{k = 0}^{n-1}\rho(\lambda_k)\leq \sum_{k = 0}^{n-1}\rho(K^k \lambda_0) \leq C_3 R\lambda_0^{-1/\gamma }.
	\end{equation}
	In the third case, we have the lower bound for $s_0 - s_n$, 
	\begin{equation}
		s_0 - s_n \geq (l^3\varepsilon R)^2 - (l^4 \varepsilon R)^2 \geq (l^4 \varepsilon R)^2
	\end{equation}
	and then the lower bound,
	\begin{equation}
		s_0 - s_n = \sum_{k = 1}^{n-1} (s_k - s_{k +1}) \leq \sum_{k = 1}^{n - 1}\rho(\lambda_k)^2 \leq C_4 R^2 \lambda_0^{-2/\gamma}.
	\end{equation}
	Comparing the lower bound and the upper bound of $d(w_0^x, w_n^x)$, $\abs{w_0^y - w_n^y}$, and $s_0 - s_n$ in all three cases, we obtain that 
	\begin{equation}
		\lambda_0 \leq C_5 = C_5(C_2, C_3, C_4, \sigma, \eta, \gamma).
	\end{equation}
	Since $\lambda_0$ is the value of $u$ at an arbitrary point in $Q_-$, it follows that $sup_{Q_-} u \leq C_4$, which finished the proof of (PHI).
\end{proof}

\subsection{Proof of Theorem \ref{thm:EHI_weak_solution_Ls}.}
We are now ready to provide the proof of our main result Theorem \ref{thm:EHI_weak_solution_Ls}. 
Since (HKE) holds on $(X, d, \mu, \mE, \mF)$, we have by Lemma \ref{lem:LLE_PHI}, (HKE-a) holds on $(X_a, d_a, \mu_a, \mE_a, \mF_a)$. Then following from Theorem \ref{thm:HKE_a_LLE}, we see that (LLE) holds on $(X_a, d_a, \mu_a, \mE_a, \mF_a)$. At last, we have the Parabolic Harnack inequality for non-negative caloric functions by Theorem \ref{thm:LLE_PHI}.

Suppose $f$ is a non-negative weak solution of $(-L)^sf = 0$. By Theorem \ref{thm:extension_theorem}, there exists $U$ such that $U(\cdot, 0) = f$ and, since by construction the conormal derivative $$- \frac{2^{2s-1} \Gamma(s)}{\Gamma(1-s)} \underset{y\to 0^+}{\lim} y^a \frac{\partial U}{\partial y}(\cdot,y)$$ vanishes, one can extend evenly $U$ so that it is harmonic in $X_a$ (see \cite{Caffarelli_2007}). By taking $\tilde U(t, x, y) = U(x, y)$, which is a constant in the time variable $t$, it is clear that $\tilde U$ is a caloric function. Hence the inhomogeneous parabolic Harnack inequality and the inhomogeneous H\"{o}lder continuity hold for $\tilde U$. Using the fact that $\tilde U$ is independent of $t$, we take the trace operator and project the $y$ direction of $\tilde{U}$, and then derive the Harnack inequality for $f$. The H\"{o}lder continuity of $f$ follows similarly from the projection.

\subsection{Some remarks on {\sl boundary} Harnack principles}

In this Section, we briefly discuss boundary Harnack principles (BHP) for some generators on Dirichlet spaces, under natural but more restrictive assumptions than the ones of the previous sections. In the rest of the section, we always assume that the Dirichlet space under consideration admits {\sl an interior Harnack inequality}. Boundary Harnack inequalities are more subtle properties than their interior versions. They require some sort of regularity and/or geometry of the boundary. When dealing with powers of generators of local Dirichlet forms (namely our setup), the situation starts becoming even more involved since one has two routes (which do not seem to be equivalent in our general setting) to prove such results: 

\begin{itemize}
\item One can start by extending the $s$-harmonic function from $X$ into $X_a$ and then apply a version of Boundary Harnack on the space $X_a$ for the new operator. We then come back to the original space $X$ taking a trace. 
\item One derives directly a BHP on $X_a$ and then taking the trace one defines first a solution of the original harmonic function as a trace of the extended function and as a by-product one gets the desired BHP on $X$.  
\end{itemize}  

The two previous routes are equivalent in some geometries, like for instance the Euclidean case for Lipschitz domains \cite{Caffarelli_2007}. The argument in \cite{Caffarelli_2007} can be easily extended to other nice geometric contexts under natural assumptions. However, it is known \cite{BKT} that powers of the Euclidean laplacian satisfy BHP on {\sl any } open domains. As far as we know, there is no purely PDE proof of such a result. In a series of important works, De Silva and Savin \cite{deSilvaSavinAJM,deSilvaSavinJDE,deSilvaSavinRMI} initiated the derivation of BHP from only PDE tools, applying it to several free boundary problems. It is not clear to us how to make several of their arguments work in our general setting, though as alluded before, many of their strategy could be implement in a {\sl smoother} situation.  

Keeping in mind such a striking difference between the laplacian and the fractional laplacian, it would be desirable to have a BHP in our setting in the most general admissible domains. When one tries to follow the first strategy described above, one faces the obvious problem: a ball in $X_a$ is not the product of a ball in $X$ times an interval. In the case of very simple ambiant spaces (like Euclidean spaces) and Lipschitz domains  one can remedy to this issue by extending properly the function and using transformations which allow to apply BHP in so-called {\sl slit domains} (see e.g. \cite{Caffarelli_2007,ACS,CSS} and the papers of De Silva and Savin mentioned above).   

The notion of admissible domain we now consider is the one of {\sl uniform domains}.  If $x$ is a point in $\Omega$, denote by $\delta_\Omega(x) = d(x, X\backslash \Omega)$ the distance from $x$ to the boundary of $\Omega$. We define

\begin{definition}
Let $\gamma:[\alpha, \beta] \rightarrow \Omega$ be a rectifiable curve in $\Omega$ and let $c\in (0,1), C\in(1,\infty)$. We call $\gamma $ a uniform curve in $\Omega$ if 
$$
\delta_\Omega(\gamma(t)) \geq c\cdot \min\crl{d(\gamma(\alpha), \gamma(t)), d(\gamma(t), \gamma(\beta))}, \text{ for all } t\in[\alpha,\beta],
$$
and if 
$$\text{length}(\gamma) \leq C\cdot d(\gamma(\alpha), \gamma(\beta)).$$
The domain $\Omega$ is called uniform if any two points in $\Omega$ can be joined by a uniform curve in $\Omega$.
\end{definition}

The following result is proved in \cite{gianmarco}. 
\begin{lemma}\label{uniformNages}
The domain $X_a^+$ is uniform in $X_a$. 
\end{lemma}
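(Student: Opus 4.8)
The plan is to construct, for any two points $z=(z_x,z_y), w=(w_x,w_y)\in X_a^+$ with $z_y,w_y>0$, an explicit uniform curve joining them inside the open upper half-space $X_a^+=X\times(0,\infty)$. The governing observation is that in $X_a^+$ the distance to the boundary is simply the $y$-coordinate: $\delta_{X_a^+}(v)=d_a(v,X\times\{0\})=v_y$, since $d_a$ is the product metric \eqref{eq:da}. Thus the two conditions in the definition of uniform curve become quantitative statements about how fast the $y$-coordinate is allowed to drop along the path and about the total length; both are scale-invariant, so one may normalise and split into cases according to whether the horizontal separation $d(z_x,w_x)$ is large or small compared with $\min\{z_y,w_y\}$.

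First I would treat the easy regime, where $d(z_x,w_x)\le \min\{z_y,w_y\}$ (or more generally where $z$ and $w$ are both far from the boundary relative to their separation): here one wants a curve from $z$ to $w$ that stays at height at least, say, $\tfrac12\min\{z_y,w_y\}$. Since $(X,d)$ is a length space in the relevant examples — or, in general, one uses that the product metric $d_a$ is geodesic-like enough to connect points by curves of length comparable to $d_a(z,w)$ — one can first move horizontally at constant height $\max\{z_y,w_y\}$ after a short vertical segment, then descend vertically. The length of this concatenation is at most a constant times $d_a(z,w)$ and the height never drops below $\min\{z_y,w_y\}$, which dominates $\min\{d_a(z,\gamma(t)),d_a(\gamma(t),w)\}$ by a fixed fraction. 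The second, genuinely relevant regime is when $d(z_x,w_x)$ is large compared to the heights $z_y,w_y$: then the natural curve goes up from $z$ to height $\asymp d(z_x,w_x)$, travels horizontally across at that height, and comes back down to $w$. Along the upward and downward legs the height at a point is comparable to its distance to the respective endpoint, so the corkscrew-type condition $\delta_{X_a^+}(\gamma(t))\ge c\min\{d_a(\gamma(\alpha),\gamma(t)),d_a(\gamma(t),\gamma(\beta))\}$ holds with an absolute $c$; along the horizontal leg the height is $\asymp d(z_x,w_x)\asymp d_a(z,w)$, so the condition is trivially satisfied there. The total length is $\lesssim z_y+w_y+d(z_x,w_x)\asymp d_a(z,w)$, giving the length bound with an absolute $C$.

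The main obstacle is the horizontal travel step: one needs, inside $X$, a curve joining $z_x$ to $w_x$ whose length is comparable to $d(z_x,w_x)$, i.e. one needs $(X,d)$ to be (quasi-)geodesic, and one must check this is available under the standing hypotheses. Here I would invoke that under the sub-Gaussian heat kernel estimates \eqref{eq:subgaussian} the space $(X,d)$ is in fact a length space (this is a known consequence of $(\mathrm{VD})$ together with the heat-kernel lower bound, via chaining arguments — see the references on stability of sub-Gaussian estimates cited in the excerpt), so that any two points are joined by curves of length $\le (1+\eta)d(z_x,w_x)$ for every $\eta>0$. With that in hand, one concatenates: a vertical segment in the $y$-variable (which is literally a Euclidean interval, hence a geodesic in $\R$), a near-geodesic horizontal curve in $X$ lifted at constant height, and another vertical segment; a short computation with the triangle inequality for $d_a$ then verifies both defining inequalities with explicit constants $c\in(0,1)$, $C\in(1,\infty)$ independent of $z,w$. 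Finally, since $X_a^+$ is connected and the construction works for every pair, $X_a^+$ is uniform in $X_a$. (As noted in the excerpt, this statement and argument are already carried out in \cite{gianmarco}; we include the plan here for completeness and to isolate the one place where the length-space property of $(X,d)$ enters.)
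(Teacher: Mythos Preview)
The paper does not supply its own proof of this lemma; it simply cites \cite{gianmarco}. Your plan is exactly the standard argument used there (and elsewhere) to show that the upper half-space over a length space is a uniform domain: the ``go up, go across at height $\asymp d(z_x,w_x)$, come down'' curve, together with the observation $\delta_{X_a^+}(v)=v_y$, verifies both defining inequalities with absolute constants. So your approach coincides with what the cited reference does.

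One small caution on the point you yourself flag: the claim that two-sided sub-Gaussian heat-kernel bounds \emph{alone} force $(X,d)$ to be a genuine length space is a bit stronger than what the standard references give; what one typically extracts (e.g.\ via Grigor'yan--Telcs type arguments) is a \emph{chain condition}, which yields discrete $\varepsilon$-chains of total length comparable to $d(z_x,w_x)$ rather than a single rectifiable curve. In the setting of \cite{gianmarco} the base is a Cheeger space, hence geodesic by assumption, and in all the fractal examples of the present paper $(X,d)$ is geodesic as well, so the issue does not arise in practice. If you want the argument to go through at the stated level of generality, either add the length-space hypothesis explicitly or observe that the chain condition is enough to build a piecewise curve (vertical segments plus short horizontal hops at the high level) whose length is still $\lesssim d_a(z,w)$ and whose $y$-coordinate satisfies the required lower bound.
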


The previous lemma is our starting point and combined with the results in \cite{lierl2014} for instance, allows to get 
\begin{lemma}\label{BHPextended}(Boundary Harnack inequality in extended spaces)
Assume that the Dirichlet space $(X_a:=X \times \mathbb R, \mathcal E_a, \mu_a)$ is our local Dirichlet space supporting an interior Harnack estimate for the generator $L_a$.   Let $\xi \in X$ and consider two nonnegative harmonic functions $u,v$ vanishing continuously on  $B_R(\xi)$ for some $R>0$ (this is a ball in $X_a$). Then there exists a constant $C>0$ such that for any $\xi \in X$
    $$
    \frac{u(x)}{u(x')}\leq C \frac{v(x)}{v(x')},$$
    
    for all $x, x'\in \overline B(\xi, r)$ and $r<R/2$. The constant $C$ depends only on the constant in the interior Harnack inequality. 
    \end{lemma}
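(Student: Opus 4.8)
The plan is to obtain the statement by reducing it to the abstract boundary Harnack principle for local Dirichlet forms on (inner) uniform domains established in \cite{lierl2014}, after checking that the extended space $(X_a, d_a, \mu_a, \mathcal E_a)$ and the domain $X_a^+$ fit into that framework. First I would record the geometric inputs on $X_a$. Since (HKE) holds on $X$, the measure $\mu$ is $d_H$-Ahlfors regular, and $\nu_a = |y|^a\,dy$ is a doubling measure on $\mathbb R$ (in fact an $A_2$ weight); hence the product measure $\mu_a = \mu\otimes\nu_a$ on $X_a = X\times\mathbb R$ is volume doubling for the product metric $d_a$. By Theorem \ref{thm:X_a_Dirichlet_space}, $(\mathcal E_a,\mathcal F_a)$ is a strongly local regular Dirichlet form on $L^2(X_a,\mu_a)$ with generator $L_a = L + \mathcal B_a$, so $X_a$ is a \emph{bona fide} local Dirichlet space.

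Next I would invoke the hypothesis of the lemma, which is precisely that $(X_a,\mathcal E_a,\mu_a)$ supports an interior (elliptic) Harnack inequality for $L_a$. Together with the volume doubling property just noted, this places us exactly in the setting of \cite{lierl2014}: a volume doubling local Dirichlet space carrying a scale-invariant Harnack inequality, for which the boundary Harnack principle holds on inner uniform domains, with constants depending only on the doubling constant, the Harnack constant, and the inner-uniformity constants of the domain. I would then apply this to the domain $\Omega = X_a^+ = X\times(0,+\infty) \subset X_a$: by Lemma \ref{uniformNages} this domain is uniform, hence inner uniform, in $X_a$, and its boundary is $X\times\{0\}$, which contains the trace of any $\xi\in X$. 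For $R>0$ the metric ball $B_R(\xi)$ in $X_a$ centered at $(\xi,0)$ is a boundary ball for $\Omega$, so two nonnegative functions $u,v$ harmonic in $B_R(\xi)\cap\Omega$ and vanishing continuously on $B_R(\xi)\cap\partial\Omega$ satisfy, by the boundary Harnack principle of \cite{lierl2014},
\[
\frac{u(x)}{u(x')}\le C\,\frac{v(x)}{v(x')},\qquad x,x'\in \overline B(\xi,r)\cap\Omega,\ r<R/2,
\]
with $C$ controlled by the constants above. Since $u,v$ extend continuously by $0$ across $X\times\{0\}$, the inequality passes to all $x,x'\in\overline B(\xi,r)$, which is the claimed estimate, and $C$ is seen to depend only on the interior Harnack constant.

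The main obstacle, and the place where real care is needed, is the verification step: one must check that the structural hypotheses of \cite{lierl2014} genuinely hold on $X_a$ \emph{despite} the fact that $X_a$ does not support sub-Gaussian (nor Gaussian) heat kernel bounds and that its geometry near $X\times\{0\}$ is anisotropic — a ball $B_R(\xi)$ in $X_a$ is not the product of a ball in $X$ with an interval. Having assumed the interior Harnack inequality outright, the residual work is geometric bookkeeping rather than hard analysis: confirming that the notion of boundary ball used in \cite{lierl2014} is compatible with the $d_a$-balls $B_R(\xi)$ appearing in the statement, that $X_a^+$ is inner uniform with the constants supplied by Lemma \ref{uniformNages}, and that these are the only quantities (besides the interior Harnack constant) entering $C$. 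A secondary point to pin down is the precise sense of ``vanishing continuously on $B_R(\xi)$'': in the extension picture this should be read as vanishing continuously on $B_R(\xi)\cap(X\times\{0\})$ for functions harmonic in $B_R(\xi)\cap X_a^+$, which is exactly the hypothesis the cited boundary Harnack theorem requires.
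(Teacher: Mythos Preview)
Your approach is correct and matches the paper's: the paper does not give a detailed proof of this lemma but simply states that it follows by combining Lemma~\ref{uniformNages} (uniformity of $X_a^+$ in $X_a$) with the abstract boundary Harnack results of \cite{lierl2014}, which is precisely the reduction you carry out. Your added discussion of the verification steps and the interpretation of ``vanishing continuously on $B_R(\xi)$'' is a useful fleshing-out of what the paper leaves implicit.
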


    The point now would be to recover the desired BHP for $s-$harmonic functions on $X$ out of Lemma \ref{BHPextended} (which involves functions on $X_a$). One would need to prove that an $s-$harmonic function extends to a function satisfying the conditions of Lemma \ref{BHPextended} with a {\sl controled} geometry. As already mentioned, it is not clear in our general setup how to produce such a situation. We leave this problem to future work. 
    
{We would like to point out that Cao and Chen in \cite{caoChen} proved a Uniform Boundary Harnack Principle for operators of the type $(-L)^s$ {\sl without using any extension}, then addressing some questions above. However, as stressed in the previous discussion, we would like to have an approach based on the extension in view of some PDE problems posed on higher codimensional boundaries where the extension happens to be the original formulation.   }

\section*{Acknowledgments}
F.B. would like to thank Z.Q. Chen for discussions and pointing out  references.
F.B. was partially funded by NSF grant  DMS-2247117 when most of this work was completed. 
Q.L. would like to thank the University of Connecticut for its hospitality during the preparation of this work. Y.S. is partially funded by NSF DMS grant $2154219$, " Regularity {\sl vs} singularity formation in elliptic and parabolic equations".

\bibliographystyle{plain}
\bibliography{Refs.bib}

\end{document}